\newtheorem{lemma}{Lemma}[section]
\newtheorem{theorem}[lemma]{Theorem}
\newtheorem{corollary}[lemma]{Corollary}
\newtheorem{proposition}[lemma]{Proposition}
\newtheorem{procedure}[lemma]{Procedure}
\theoremstyle{definition}
\newtheorem{remark}[lemma]{Remark}
\newtheorem{observation}[lemma]{Observation}
\newtheorem{example}[lemma]{Example}
\newcommand{\define}[1]{{\bfseries\itshape #1}}
\renewcommand{\theequation}%
{\arabic{section}.\arabic{lemma}.\arabic{equation}}
\renewcommand{\AA}{\ensuremath{\mathbb{A}}} 
\newcommand{\CC}{\ensuremath{\mathbb{C}}} 
\newcommand{\NN}{\ensuremath{\mathbb{N}}}
\newcommand{\PP}{\ensuremath{\mathbb{P}}} 
\newcommand{\RR}{\ensuremath{\mathbb{R}}} 
\newcommand{\ZZ}{\ensuremath{\mathbb{Z}}} 
\renewcommand{\geq}{\geqslant}
\renewcommand{\leq}{\leqslant}
\DeclareMathOperator{\codim}{codim}
\DeclareMathOperator{\cone}{cone}
\DeclareMathOperator{\conv}{conv}
\DeclareMathOperator{\Ker}{Ker}
\DeclareMathOperator{\Image}{Im}
\DeclareMathOperator{\New}{New}
\DeclareMathOperator{\Pos}{P}
\DeclareMathOperator{\Proj}{Proj}
\DeclareMathOperator{\sign}{sgn}
\DeclareMathOperator{\Sos}{\Sigma}
\DeclareMathOperator{\Spec}{Spec}
\DeclareMathOperator{\Sym}{Sym}
\DeclareMathOperator{\variety}{V}
\DeclareMathOperator{\hh}{h}
\begin{document}

\title[Sums of squares and minimal degree]{Sums of squares and varieties of
  minimal degree}

\author[G.~Blekherman]{Grigoriy Blekherman}
\address{Greg Blekherman\\ School of Mathematics \\ Georgia Tech, 686 Cherry
  Street\\ Atlanta\\ GA\\ 30332\\ USA}
\email{\href{mailto:greg@math.gatech.edu}{greg@math.gatech.edu}}

\author[G.G.~Smith]{Gregory G. Smith}
\address{Gregory G. Smith\\ Department of Mathematics and Statistics \\ Queen's
  University\\ Kingston \\ ON \\ K7L~3N6\\ Canada}
\email{\href{mailto:ggsmith@mast.queensu.ca}{ggsmith@mast.queensu.ca}}

\author[M.~Velasco]{Mauricio Velasco} \address{Mauricio Velasco\\ Departamento
  de Matem\'aticas\\ Universidad de los Andes\\ Carrera 1 No. 18a 10\\ Edificio
  H\\ Primer Piso\\ 111711 Bogot\'a\\ Colombia}
\email{\href{mailto:mvelasco@uniandes.edu.co}{mvelasco@uniandes.edu.co}}

\subjclass[2010]{14P05; 12D15, 90C22}

\date{3 August 2013}

\begin{abstract}
  Let $X \subseteq \PP^n$ be a real nondegenerate subvariety such that the set
  $X(\RR)$ of real points is Zariski dense.  We prove that every real quadratic
  form that is nonnegative on $X(\RR)$ is a sum of squares of linear forms if
  and only if $X$ is a variety of minimal degree.  This substantially extends
  Hilbert's celebrated characterization of equality between nonnegative forms
  and sums of squares.  We obtain a complete list for the cases of equality and
  also a classification of the lattice polytopes $Q$ for which every nonnegative
  Laurent polynomial with support contained in $2Q$ is a sum of squares.
\end{abstract}

\maketitle

\section{Introduction} 
\label{sec:intro}

\noindent
The study of nonnegativity and its relation with sums of squares is a basic
challenge in real algebraic geometry.  The classification of varieties of
minimal degree is one of the milestones of classical complex algebraic geometry.
The goal of this paper is to establish the deep connection between these
apparently separate topics.

To achieve this, let $X \subseteq \PP^n$ be an embedded real projective variety
with homogeneous coordinate ring $R$.  The variety $X$ has minimal degree if it
is nondegenerate (i.e.\ not contained in a hyperplane) and $\deg(X) = 1 +
\codim(X)$.  An element $f \in R$ is nonnegative if its evaluation at each real
point of $X$ is at least zero.  Our main theorem is a broad generalization of
Hilbert's 1888 classification of nonnegative forms and provides a tight
connection between real and complex algebraic geometry.

\begin{theorem}
  \label{thm:main}
  Let $X \subseteq \PP^n$ be a real irreducible nondegenerate projective
  subvariety such that the set $X(\RR)$ of real points is Zariski dense.  Every
  nonnegative real quadratic form on $X$ is a sum of squares of linear forms if
  and only if $X$ is a variety of minimal degree.
\end{theorem}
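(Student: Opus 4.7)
The plan is to prove both implications by passing to the dual cones $\Pos_X^* \subseteq \Sos_X^* \subseteq R_2^*$, whose coincidence is, by bipolar duality, equivalent to $\Pos_X = \Sos_X$. Here $\Sos_X^*$ is the spectrahedron of linear functionals $L \in R_2^*$ for which the symmetric bilinear form $(\ell, \ell') \mapsto L(\ell \ell')$ on $R_1$ is positive semidefinite, while $\Pos_X^*$ is the closed conic hull of the evaluations $\eval_p \in R_2^*$ for $p \in X(\RR)$, each of which corresponds to the rank-one PSD form $\ell \mapsto \ell(p)^2$. The inclusion $\Pos_X^* \subseteq \Sos_X^*$ is automatic, so the problem reduces to deciding when every PSD functional on $R_2$ is a nonnegative combination of point evaluations from $X(\RR)$.

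For the implication that minimal degree forces $\Pos_X = \Sos_X$, I would invoke the Del Pezzo--Bertini classification: up to iterated coning, every variety of minimal degree is either a projective space, an irreducible quadric hypersurface, a rational normal scroll, or the Veronese surface in $\PP^5$. Each case admits an explicit SOS argument. Quadratic forms on $\PP^n$ are PSD if and only if SOS by simultaneous diagonalization; on an irreducible quadric the defining equation permits a Lagrange-type decomposition; on the Veronese surface quadratic forms pull back to ternary quartics, where Hilbert's 1888 theorem applies directly; and scrolls admit rational parametrizations that reduce the problem to bihomogeneous forms on $\PP^1 \times \PP^m$, handled by a fiberwise diagonalization combined with the binary forms case. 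Extending the equality from a base to its cone is routine, since completing the square in the new vertex variable reduces SOS-ness of $q + t\ell + ct^2$ on the cone to SOS-ness of $q - \ell^2/(4c)$ on the base together with one extra square.

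The main obstacle is the converse: given $\deg X \geq 2 + \codim X$, produce an explicit nonnegative form that is not a sum of squares. The key geometric input is that a general zero-dimensional linear section $\Gamma$ of $X$ consists of $\deg X$ points in linearly general position inside a $\PP^{\codim X}$, and when $X$ is not of minimal degree these points outnumber the $1 + \codim X$ required for a projective simplex, forcing them to satisfy a nontrivial linear dependence among their evaluations on $R_1$. The plan is to exploit this deficiency to build a signed combination $L = \sum_{p \in \Gamma} c_p \eval_p \in R_2^*$ with at least one strictly negative coefficient, verify that the induced quadratic form $\ell \mapsto L(\ell^2)$ remains positive semidefinite on $R_1$ (so that $L \in \Sos_X^*$), and then show that $L$ cannot be rewritten as a nonnegative combination of any evaluations at points of $X(\RR)$. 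Convex separation of $L$ from $\Pos_X^*$ would then yield the desired nonnegative form not in $\Sos_X$. The hard part will be simultaneously engineering the weights $c_p$ so that positive semidefiniteness survives the introduction of the negative coefficient and ruling out every alternative nonnegative representation of $L$; this should rest on a careful Hilbert-function analysis of $\Gamma$ and its interactions with nearby subschemes of $X$.
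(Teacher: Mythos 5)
Your reduction to the dual cones and the overall architecture (one implication via the classification, the other via a separating functional supported on a linear section) is sound, but the two halves compare differently with the paper's argument.

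For the direction ``minimal degree $\Rightarrow \Pos_X=\Sos_X$'', your route is genuinely different from the paper's. You propose to quote the Del Pezzo--Bertini classification and then treat each family by a known theorem: the S-lemma for quadrics, Hilbert's 1888 theorem for the Veronese surface, and the matrix Fej\'er--Riesz/CLR-type result for scrolls (note that for unequal twists $d_i$ the scroll does not reduce to $\PP^1\times\PP^m$; what you actually need is the decomposition of pointwise-PSD symmetric matrices with entries binary forms of varying degrees, together with your completing-the-square reduction for cones and a check that the relevant multiplication map $\Sym^2(V)\to H^0(2D)$ is surjective so that squares of sections are squares of linear forms on $X$). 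This works, but it is heavier: it consumes the classification and Hilbert's theorem as inputs. The paper instead gives a uniform, classification-free argument: every extremal ray $\ell$ of $\Sos_X^*$ either is a point evaluation or has $\Ker\bigl(\sigma^*(\ell)\bigr)$ containing a homogeneous system of parameters $g_0,\dotsc,g_m$ (Lemma~\ref{lem:hsop}); since varieties of minimal degree are arithmetically Cohen--Macaulay, the Koszul complex gives $\dim(R/(g_0,\dotsc,g_m))_2=\varepsilon(X)=0$, so $\ell$ would annihilate all of $R_2$, a contradiction. That argument \emph{re-derives} Hilbert's theorem rather than assuming it, which is part of the point of the paper.

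For the converse your plan coincides with the paper's (Proposition~\ref{pro:onlyif}), but the two steps you flag as ``the hard part'' are exactly where the content lies, and as written there are gaps. First, you need the general zero-dimensional section $\Gamma$ to contain enough \emph{real} points; this requires the Bertini-type argument that $m-1$ general hyperplane sections of a totally-real variety give a totally-real nondegenerate curve, and even then one may only get $e+1$ real points plus conjugate pairs, so the construction must be run in two cases (the paper's second case uses the real and imaginary parts of $(\tilde a\pm\tilde b\sqrt{-1})^*$). Second, the PSD-ness of $L=\sum c_p\,\eval_p^2$ with one negative weight is not automatic: it holds for a specific choice of the negative coefficient (a harmonic-mean-type expression in the $\lambda_j^2/\kappa_j$) and is verified by the Cauchy--Schwarz inequality. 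Third, you do not need to ``rule out every alternative nonnegative representation of $L$'': it suffices to exhibit a single form that is strictly positive on $X(\RR)$ and annihilated by $L$ (the paper uses $g^2+h_1^2+\dotsb+h_m^2$ where $g$ interpolates prescribed values at the points of $\Gamma$ and $h_1,\dotsc,h_m$ cut out $\Gamma$); any nonnegative combination of point evaluations would be strictly positive on such a form. Without these three devices the converse is not yet a proof.
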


\noindent
Using the Veronese embedding, this theorem extends to forms of any even degree
(see Remark~\ref{rem:higherdegree}).  

Together with the well-known catalogue for varieties of minimal degree (e.g.\
Theorem~1 in \cite{EisenbudHarris}), our main theorem produces a complete list
of varieties for which nonnegative quadratic forms are sums of squares.  There
are exactly three families:
\begin{itemize}
\item totally-real irreducible quadratic hypersurfaces (see
  Example~\ref{exa:hypersurface}), 
\item cones over the Veronese surface (see Example~\ref{exa:VeroneseSurface}),
  and 
\item rational normal scrolls (see Example~\ref{exa:scroll}).
\end{itemize}

By replacing elements of $R$ with global sections of a line bundle, we develop
an intrinsic version of the main theorem (see Theorem~\ref{thm:linearseries}).
Applying this to line bundles on projective space, we recover Hilbert's
classification of nonnegative forms in a standard graded polynomial ring---for
binary forms, quadratic forms, and ternary quartics, nonnegativity is equivalent
to being a sum of squares and, in all other situations, there exists nonnegative
forms that is not a sum of squares (see Example~\ref{exa:Hilbert}).  In
particular, the exceptional Veronese surface corresponds to the exceptional case
of ternary quartics.  We obtain the classification of multiforms appearing in
\cite{CLR} from line bundles on a product of projective spaces (see
Example~\ref{exa:CLR}).  More generally, by working with a projective toric
variety or a multigraded polynomial ring, we enumerate the cases in which every
nonnegative multihomogeneous polynomial may be expressed as a sum of squares.
Specifically, we discover that the ternary quartics belong to an infinite family
consisting of cones over the Veronese surface (see
Example~\ref{exa:VeroneseCone}) and all other cases come from rational normal
scrolls (see Example~\ref{exa:smoothScroll} and Remark~\ref{rem:nonsmooth}).

Enhancing the intrinsic approach for line bundles on a toric variety yields an
analogue of our main theorem for sparse Laurent polynomials.  To be more
precise, let $M$ be an affine lattice of rank $m$ and let $Q$ be an
$m$-dimensional lattice polytope in $M \otimes_{\ZZ} \RR$.  The
$\hh^*$-polynomial of $Q$ is defined by
\[
\hh^*_0(Q) + \hh^*_1(Q) \, t + \dotsb + \hh^*_m(Q) \, t^m = (1-t)^{m+1}
\sum\nolimits_{k \geq 0} |(kQ) \cap M| \, t^k \, .
\]
We establish that every nonnegative Laurent polynomial with Newton polytope in
$2Q$ is a sum of squares if and only if $\hh^*_2(Q) = 0$ and the image of the
real points under the associate morphism is dense in the strong topology (see
Theorem~\ref{thm:sparse}).  We also describe all of the lattice polytopes $Q$
for which $\hh^*_2(Q) = 0$ (see Proposition~\ref{pro:h^*_2=0}).  This
generalizes the main theorem in \cite{BN} classifying degree-one lattice
polytopes (see Remark~\ref{rem:BN}).

For the proof of Theorem~\ref{thm:main}, convexity provides the bridge between
real and complex algebraic geometry.  The collections of nonnegative elements
and sums of squares both form closed convex cones (see Lemma~\ref{lem:cones}).
More significantly, the dual of the sums-of-squares cone is a spectrahedron, so
its extremal rays have an algebraic characterization (see
Observation~\ref{ob:extreme}).  This characterization drives the transition
between real and complex algebraic geometry.

\subsection*{Contents of the Paper} 

Section~\ref{sec:convex} defines the fundamental cones: $\Pos_X$ consists of the
nonnegative elements and $\Sos_X$ consists of the sums of squares.  The
description in Lemma~\ref{lem:hsop} of the extremal rays of $\Sos^*_X$ is the
key.  In Section~\ref{sec:non-sos}, we introduce the quadratic deficiency
$\varepsilon(X)$ of the embedded variety $X \subseteq \PP^n$.  This numerical
invariant is an algebraic incarnation of $\hh^*_2(Q)$ and forms the pivotal link
between quadratic forms and varieties of minimal degree; see
Lemma~\ref{lem:deficiency}.  As Proposition~\ref{pro:onlyif} establishes, having
$\varepsilon(X) > 0$ is a sufficient condition for the existence of nonnegative
real quadratic forms on $X$ that cannot be expressed a sums of
squares. Procedure~\ref{proc:hilbert} constructs nonnegative quadratic forms
that are not sums of squares.  Proposition~\ref{pro:minimal} analyzes the
varieties with $\varepsilon(X) = 1$.  We prove the main theorem in
Section~\ref{sec:equality}.  Proposition~\ref{pro:if} shows that $\varepsilon(X)
= 0$ is sufficient.  Remark~\ref{rem:truncated} connects the main theorem to the
truncated moment problem in real analysis.  Section~\ref{sec:intrinsic}
translates the main theorem and principal examples into the intrinsic setting of
a variety with a basepoint-free linear series.  Lastly, Section~\ref{sec:sparse}
develops the polyhedral theory.

\subsection*{Acknowledgements}

We thank Bernd Sturmfels for stimulating our interest in convex algebraic
geometry.  We also thank Matthias Beck, Mircea Musta\c{t}\u{a}, and Mike Roth
for helpful conversations.  The first author was partially supported by a Sloan
Fellowship, NSF grant DMS-0757212, the Mittag-Leffler Institute, and IPAM; the
second author was partially supported by NSERC, the Mittag-Leffler Institute,
and MSRI; and the third author was partially supported by the FAPA grants from
Universidad de los Andes.

\section{Convexity and Spectrahedral Properties}
\label{sec:convex}

\noindent
In this section, we develop the necessary tools from convex algebraic geometry.
We carefully define the fundamental cones and highlight their properties.

Let $X \subseteq \PP^n$ be a nondegenerate $m$-dimensional totally-real
projective subvariety.  In particular, $X$ is a geometrically integral
projective scheme over $\Spec(\RR)$ such that $X$ is not contained in a
hyperplane and the set $X(\RR)$ of real points is Zariski dense.  Set $e := n-m
= \codim(X)$.  If $I$ is the unique saturated homogeneous ideal vanishing on
$X$, then the $\ZZ$-graded coordinate ring of $X$ is $R := \RR[x_0,\dotsc,
x_n]/I$.  For each $j \in \ZZ$, the graded component $R_j$ of degree $j$ is a
finite dimensional real vector space.  Since $X$ is nondegenerate, we have
$\RR[x_0,\dotsc, x_n]_1 = R_1$.  Given $f \in R_{2j}$ and $p \in X(\RR)$, the
\define{sign of $f$ at $p$} is $\sign_{p}(f) := \sign \bigl(
\tilde{f}(\tilde{p}) \bigr) \in \{-1,0,1\}$ where the polynomial $\tilde{f} \in
\RR[x_0, \dotsb, x_m]_{2j}$ maps to $f$ and the nonzero real point $\tilde{p}
\in \AA^{n+1}(\RR)$ maps to $p$ under the canonical quotient homomorphisms (cf.\
\S2.4 in \cite{Scheiderer}).  Since $p \in X(\RR)$, the real number
$\tilde{f}(\tilde{p})$ is independent of the choice $\tilde{f}$.  Similarly, the
choice of the affine representative $\tilde{p}$ is determined up to a nonzero
real number, so the value of $\tilde{f}(\tilde{p})$ is determined up to the
square of a nonzero real number because the degree of $f$ is even.  We simply
write $f(p) \geq 0$ for $\sign_{p}(f) \geq 0$.

The central objects of study are the following subsets in $R_2$:
\begin{alignat*}{2}
  \Pos_{X} &:= \{ f \in R_2 : \text{$f(p) \geq 0$ for all $p \in X(\RR)$} \}
  \, , &\text{and}& \\
  \Sos_{X} &:= \{ f \in R_2 : \text{there exists $g_1, g_2, \dotsc, g_k \in R_1$
    such that $f = g_1^2 + g_2^2 + \dotsb + g_k^2$} \} \, .
\end{alignat*}
We clearly have $\Sos_X \subseteq \Pos_X$.  To describe the properties of these
subsets, consider the $\RR$-linear map $\sigma \colon \Sym^2(R_1) \to R_2$
induced by multiplication in $R$ and let $\sigma^* \colon R_2^* \to
\Sym^2(R_1^*) = \bigl( \Sym^2(R_1) \bigr)^*$ be the dual.  More explicitly, for
a linear functional $\ell \in R_2^*$, $\sigma^*(\ell)$ is the symmetric bilinear
map $R_1 \otimes_\RR R_1 \to \RR$ defined by $g_1 \otimes g_2 \mapsto
\ell(g_1g_2)$.  For $p \in X(\RR)$, evaluation at any affine representative
$\tilde{p} \in \AA^{n+1}(\RR)$ determines $\tilde{p}^* \in R_1^*$.  Because $p
\in X(\RR)$, the map $\Sym^2(R_1) \to \RR$ induced by $\tilde{p}^* \in R_1^*$
annihilates $I_2$ and defines the element $(\tilde{p}^*)^2 \in R_2^*$.  Since
evaluations at distinct representatives differ by the square of a nonzero
constant, the ray $\cone\bigl( (\tilde{p}^*)^2 \bigr) := \{ \lambda \cdot
(\tilde{p}^*)^2 : \lambda \geq 0 \} \subseteq R_2^*$ is independent of the
choice of the affine representative.

The following fundamental lemma is a minor variant of well-known results (cf.\
Theorem~3.35 in \cite{Lau} or Exercise~4.2 in \cite{BPT}).

\begin{lemma}
  \label{lem:cones}
  Both $\Pos_X$ and $\Sos_X$ are pointed full-dimensional closed convex cones in
  the real vector space $R_2$.  We also have 
  \begin{align*}
    \Pos_X^* &= \cone\bigl( (\tilde{p}^*)^2 : p \in X(\RR) \bigr) = \{
    \lambda_1^{} (\tilde{p}_1^*)^2 + \lambda_2^{} (\tilde{p}_2^*)^2 + \dotsb +
    \lambda_k^{} (\tilde{p}_k^*)^2 : \text{$\tilde{p}_i \in X(\RR)$ and
      $\lambda_i^{} \geq 0$}
    \} \, ,  \\
    \Sos_X^* &= \{ \ell \in R_2^* : \text{$\sigma^*(\ell)$ is
      positive-semidefinite} \} \, .
  \end{align*}
\end{lemma}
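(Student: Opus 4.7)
The plan is to verify the four structural properties (convex, pointed, full-dimensional, closed) for both cones and then establish the two duality formulas, handling the easier claims first. Convexity and the cone property are immediate from the definitions: sums of nonnegative elements stay nonnegative, concatenating sum-of-square decompositions gives another, and both sets are closed under multiplication by nonnegative scalars. Pointedness of $\Pos_X$ follows from Zariski density---if $\pm f \in \Pos_X$ then $f$ vanishes on $X(\RR)$, hence on $X$, so $f = 0$ in $R_2$---and then $\Sos_X \subseteq \Pos_X$ inherits pointedness. For full-dimensionality, I would use that $X$ is embedded by $R_1$, so the multiplication map $\sigma \colon \Sym^2(R_1) \to R_2$ is surjective; polarization then shows that the conic hull of $\{g^2 : g \in R_1\}$ has $\RR$-span equal to $R_2$, giving $\Sos_X$ (and a fortiori $\Pos_X$) nonempty interior.

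For closedness, $\Pos_X$ is an intersection of closed halfspaces, one for each $p \in X(\RR)$. The subtle case is $\Sos_X = \sigma(\Sigma^2)$, where $\Sigma^2 \subseteq \Sym^2(R_1)$ denotes the positive semidefinite cone: the image of a closed convex cone under a linear map need not be closed. To rule out escape to infinity, I would pick finitely many points $p_1,\dotsc,p_N \in X(\RR)$ with no common linear zero (available since $X(\RR)$ is Zariski dense and $X$ is nondegenerate) and set $\ell_0 := \sum_i (\tilde{p}_i^*)^2$. Then $\sigma^*(\ell_0)(g,g) = \sum_i g(\tilde{p}_i)^2$ is positive definite on $R_1$, so if $\sigma(G_n) \to f$ with $G_n \in \Sigma^2$ chosen in a fixed complement of $\ker \sigma$, the scalar sequence $\ell_0(\sigma(G_n))$ bounds a norm of $G_n$, allowing a convergent subsequence $G_n \to G \in \Sigma^2$ with $\sigma(G) = f$.

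For the duality identifications, I would apply the bipolar theorem. Setting $C := \cone\bigl((\tilde{p}^*)^2 : p \in X(\RR)\bigr)$, the reciprocity $\langle f, (\tilde{p}^*)^2 \rangle = \tilde{f}(\tilde{p})$ shows $C^{*} = \Pos_X$, hence $\Pos_X^{*} = \overline{C}$. To drop the closure, I would lift $X(\RR) \subseteq \PP^n(\RR)$ to the unit sphere in $\AA^{n+1}(\RR)$, observe that $\tilde{p} \mapsto (\tilde{p}^*)^2$ is continuous with compact image bounded away from the origin, and use that the conic hull of such a set is closed. The formula for $\Sos_X^{*}$ is a direct chase: $\ell \in \Sos_X^{*}$ iff $\ell(g^2) \geq 0$ for every $g \in R_1$, and $\ell(g^2) = \sigma^{*}(\ell)(g,g)$, so this is the positive semidefiniteness of $\sigma^{*}(\ell)$.

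I expect the main obstacle to be closedness of $\Sos_X$; the construction of a positive definite functional via enough real points is the one step that genuinely uses the hypothesis that $X(\RR)$ is Zariski dense, and everything else reduces to bookkeeping with the bipolar theorem and the surjectivity of $\sigma$.
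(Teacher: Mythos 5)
Your proposal is correct in substance and, for most steps, follows the same route as the paper: pointedness of $\Pos_X$ via Zariski density of $X(\RR)$, full-dimensionality via surjectivity of $\sigma$, the identification of $\Sos_X^*$ by unwinding $\ell(g^2) = \sigma^*(\ell)(g,g)$, and the bipolar argument reducing the formula for $\Pos_X^*$ to the closedness of $C := \cone\bigl((\tilde{p}^*)^2 : p \in X(\RR)\bigr)$. The one place you genuinely diverge is the closedness of $\Sos_X$. The paper takes the spherical section $K' = \{ g^2 : g \in R_1,\ \|g\|=1\}$, checks that it is compact and that $0 \notin \conv(K')$ (again using Zariski density), and invokes the fact that the conical hull of such a set is closed. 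You instead produce $\ell_0 = \sum_i (\tilde{p}_i^*)^2$ with $\sigma^*(\ell_0)$ positive definite and use it to bound preimages in the positive semidefinite cone. This is an equally standard and correct argument; in fact the parenthetical about choosing the $G_n$ in a fixed complement of $\Ker\sigma$ is unnecessary (and not obviously achievable while keeping $G_n$ positive semidefinite): positive definiteness of $\sigma^*(\ell_0)$ already gives $\ell_0(\sigma(G_n)) \geq c\,\|G_n\|$ for positive semidefinite $G_n$, which is all you need. Both routes rest on the same geometric fact that squares of nonzero real linear forms are uniformly separated from $0$ by a linear functional.

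One imprecision should be repaired. For the closedness of $C$ you invoke the ``lemma'' that the conical hull of a compact set bounded away from the origin is closed. That statement is false in general: in $\RR^3$, the set consisting of the circle $\{(\cos\theta,\sin\theta,1)\}$ together with the point $(1,0,-1)$ is compact and every point has norm $\sqrt{2}$, yet its conical hull is $\{z \geq \sqrt{x^2+y^2}\} + \RR_{\geq 0}(1,0,-1)$, which contains $(-t,1,\sqrt{t^2+1}) + t(1,0,-1) \to (0,1,0)$ but not the limit $(0,1,0)$ itself. The correct hypothesis is $0 \notin \conv(K)$, i.e.\ that $K$ lies in an open half-space; this is exactly what the paper verifies using that $X$ is totally real. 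In your setting the fix is immediate: pairing each normalized $(\tilde{p}^*)^2$ with the strictly positive quadratic form $x_0^2 + \dotsb + x_n^2$ gives a value bounded below by a positive constant on the compact section, so its convex hull misses the origin. With that adjustment your argument is complete.
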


\begin{proof}
  We first consider the nonnegative elements.  Set $P := \cone\bigl(
  (\tilde{p}^*)^2 : p \in X(\RR) \bigr)$.  By definition, an element $f \in R_2$
  belongs to $\Pos_X$ if and only if $f(p) \geq 0$, so $P^* = \Pos_X$.  It
  follows that $\Pos_X$ is a closed convex cone and $(P^*)^* = \Pos_X^*$.  To
  show that $P$ is closed, fix an inner product on $R_2^*$ and let $\ell \mapsto
  \| \ell \|$ denote the associated norm.  For each $p \in X(\RR)$, the linear
  functional $\frac{ (\tilde{p}^*)^2 }{\| (\tilde{p}^*)^2 \|} \in R_2^*$ is
  independent of the choice of the affine representative.  Since $X(\RR)
  \subseteq \PP^n(\RR)$ is compact in the induced metric topology, the spherical
  section $K := \left\{ \frac{ (\tilde{p}^*)^2 }{\| (\tilde{p}^*)^2 \|} : p \in
    X(\RR) \right\}$ of $P$ is compact.  Because $X$ is totally-real, the convex
  hull of $K$ does not contain $0$.  Since $P$ is the conical hull of $K$, the
  cone $P$ is closed and $P = \Pos_X^*$.  By hypothesis, the set $X(\RR)$ of
  real points is Zariski dense, so $\Pos_X$ cannot contain a nonzero linear
  subspace.

  We next examine the sums of squares.  For $\ell \in \Sos_X^*$, we have
  $\ell(f^2) \geq 0$ for all $f \in R_1$, so the bilinear symmetric form
  $\sigma^*(\ell)$ is positive semidefinite.  Conversely, if $\sigma^*(\ell)$ is
  positive semidefinite, then $\ell(g^2) \geq 0$ for all $g \in R_1$.  Hence, we
  have $\ell(g_1^2 + g_2^2 + \dotsb +g_k^2) = \ell(g_1^2) + \ell(g_2^2) + \dotsb
  + \ell(g_k^2) \geq 0$ for $g_1, g_2, \dotsc, g_k \in R_1$, and $\ell \in
  \Sos_X^*$.  Thus, $\ell \in \Sos_X^*$ if and only $\sigma^*(\ell)$ is a
  positive-semidefinite symmetric bilinear form.  By duality, the cone $\Sos_X$
  is a linear projection of the convex cone $\mathbb{S}_+$ of
  positive-semidefinite symmetric bilinear forms.  Since $\mathbb{S}_+$ is
  full-dimensional and $\sigma \colon \Sym^2(R_1) \to R_2$ is surjective, it
  follows that $\Sos_X$ is also full-dimensional.  To complete the proof, fix an
  inner product on $R_1$ and let $g \mapsto \|g \|$ denote the associated norm.
  The spherical section $K' := \{ g^2 \in R_2 : \text{$g \in R_1$ satisfies $\|
    g\| =1$} \}$ is compact, because it is the continuous image of a compact
  set.  As above, its convex hull does not contain the origin.  Therefore, the
  cone $\Sos_X$ is closed.
\end{proof}

The subsequent observation is the key insight from convex geometry needed to
prove our main result.  Lemma~\ref{lem:hsop} is the simple, but crucial,
algebraic consequence of this observation.

\begin{observation}
  \label{ob:extreme}
  Lemma~\ref{lem:cones} shows that $\Sos_X^*$ is a spectrahedron, that is a
  section of the convex cone $\mathbb{S}_{+}$ of positive-semidefinite symmetric
  bilinear forms.  Hence, Theorem~1 in \cite{RamanaGoldman} implies that every
  face of $\Sos_X^*$ is exposed.  The unique face containing $\ell \in \Sos_X^*$
  in its relative interior is given by $H_\ell \cap \Sos_X^*$ where $H_\ell :=
  \{ \ell' \in R_2^* : \Ker \bigl( \sigma^*(\ell) \bigr) \subseteq \Ker \bigl(
  \sigma^*(\ell') \bigr) \}$.  Moreover, Corollary~3 in \cite{RamanaGoldman}
  characterizes the extremal rays as follows: a point in a spectrahedron is
  extremal if and only if the kernel of its associated positive semidefinite
  form is maximal with respect to the inclusion.  Hence, if $\ell \in \Sos_X^*$
  is an extremal point and $A \in \Image(\sigma^*)$ such that $\Ker\bigl(
  \sigma^*(\ell) \bigr) \subseteq \Ker(A)$, then we have $\sigma^*(\ell) =
  \lambda A$ for some $\lambda \in \RR$.
\end{observation}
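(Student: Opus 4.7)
The plan is to realize $\Sos_X^*$ as a linear section of the PSD cone and then import the spectrahedral face theorems of Ramana and Goldman. The duality formula in Lemma~\ref{lem:cones} expresses $\Sos_X^* = (\sigma^*)^{-1}(\mathbb{S}_+)$, and since $\sigma^* \colon R_2^* \to \Sym^2(R_1^*)$ is $\RR$-linear, this exhibits $\Sos_X^*$ as a linear section of $\mathbb{S}_+ \subseteq \Sym^2(R_1^*)$, i.e.\ a spectrahedron. Theorem~1 of \cite{RamanaGoldman} then immediately gives that every face of $\Sos_X^*$ is exposed.

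To identify the face containing a given $\ell \in \Sos_X^*$ in its relative interior, I would pull back the standard description of faces of $\mathbb{S}_+$: the face through $A \in \mathbb{S}_+$ in its relative interior is $\{B \in \mathbb{S}_+ : \Ker(A) \subseteq \Ker(B)\}$. Since the faces of the spectrahedron $\Sos_X^*$ are precisely the preimages under $\sigma^*$ of the faces of $\mathbb{S}_+$ it meets, this pulls back to $H_\ell \cap \Sos_X^*$, as asserted. Corollary~3 of \cite{RamanaGoldman} then characterizes the extremal rays by maximality of the kernel of $\sigma^*(\ell)$.

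For the final consequence I would run a short perturbation argument. Write $A = \sigma^*(\ell')$ for some $\ell' \in R_2^*$ and set $V := \Ker\bigl(\sigma^*(\ell)\bigr)$. Relative to any splitting $R_1 = V \oplus W$, the form $\sigma^*(\ell)$ is block-diagonal with positive definite $W$-block and vanishes on $V$, while the hypothesis $V \subseteq \Ker(A)$ makes $A$ also block-diagonal with vanishing $V$-block. Hence for sufficiently small $\epsilon > 0$ the forms $\sigma^*(\ell) \pm \epsilon A$ remain positive semidefinite with kernel still containing $V$, so both $\ell \pm \epsilon \ell'$ lie in $H_\ell \cap \Sos_X^*$. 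By extremality this face is the ray through $\ell$; adding and subtracting the resulting proportionality relations forces $\ell'$, and hence $A$, to be a scalar multiple of $\sigma^*(\ell)$. The only real obstacle here is the eigenvalue estimate that preserves positive semidefiniteness under the perturbation; everything else is essentially bookkeeping with the cited spectrahedral results.
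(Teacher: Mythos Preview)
Your proposal is correct and follows the same approach as the paper, which treats the observation as self-contained by citing Theorem~1 and Corollary~3 of \cite{RamanaGoldman}. Your perturbation argument for the final claim simply unpacks what the paper leaves as an immediate consequence of the maximality-of-kernel characterization, so it is an elaboration rather than a genuinely different route.
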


\begin{lemma}
  \label{lem:hsop}
  If $\ell \in R_2^*$ generates an extremal ray of $\Sos_X^*$, then either
  $\ell$ is given by evaluation at some $p \in X(\RR)$ or the subspace $\Ker
  \bigl( \sigma^*(\ell) \bigr) \subseteq R_1$ contains a homogeneous system of
  parameters on $R$.
\end{lemma}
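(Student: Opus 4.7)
My approach is to set $W := \Ker\bigl( \sigma^*(\ell) \bigr) \subseteq R_1$ and study the graded quotient ring $\bar{R} := R/(W)$. Extremality of $\ell$ via Observation~\ref{ob:extreme} will pin down $\bar{R}$ in low degrees, and a case analysis on the rank $r := \dim_{\RR} \bar{R}_1$ of $\sigma^*(\ell)$ will exhibit the desired dichotomy.

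The first step is to show that $\dim_{\RR} \bar{R}_2 = 1$ and that the induced multiplication on $\bar{R}_1$ is positive definite. For any $\ell' \in R_2^*$, the inclusion $\Ker\bigl( \sigma^*(\ell') \bigr) \supseteq W$ is equivalent to $\ell'$ vanishing on $W \cdot R_1 \subseteq R_2$, and Observation~\ref{ob:extreme} forces any such $\ell'$ to be a scalar multiple of $\ell$. Hence $(W \cdot R_1)^{\perp} = \RR \cdot \ell$ is one-dimensional, yielding $\dim \bar{R}_2 = 1$. Since $\ell$ itself annihilates $W \cdot R_1$, it descends to an isomorphism $\bar{\ell} \colon \bar{R}_2 \to \RR$, and under this identification the multiplication $\bar{R}_1 \times \bar{R}_1 \to \bar{R}_2 \cong \RR$ is precisely the descent of the PSD form $\sigma^*(\ell)$ to $R_1 / W$, which is positive definite.

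Next I would split on $r$. If $r = 1$, then $\sigma^*(\ell) = (\tilde{p}^*)^2$ for some nonzero $\tilde{p} \in R_1^* = \AA^{n+1}(\RR)$, and since $\sigma^*(\ell)$ factors through $\sigma$, the point $\tilde{p}$ lies in $\variety(I_2)$. Either $\tilde{p}$ also lies in $\variety(I)$, i.e., in the affine cone over $X$---in which case $\ell$ is evaluation at $[\tilde{p}] \in X(\RR)$, giving the first alternative---or $[\tilde{p}] \notin X$, so that $W = \{h \in R_1 : h(\tilde{p}) = 0\}$ and $X \cap \{[\tilde{p}]\} = \emptyset$. If $r \geq 2$, I would choose an orthonormal basis $e_1, \dotsc, e_r$ of $\bar{R}_1$ with respect to the positive-definite form, so that $e_i e_j = \delta_{ij}\, u$ in $\bar{R}_2 = \RR u$. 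For each $i$, picking any $j \neq i$ (possible since $r \geq 2$) gives $u \cdot e_i = e_j^2 \cdot e_i = e_j (e_j e_i) = 0$ in $\bar{R}_3$ by associativity. Thus $\bar{R}_1 \cdot \bar{R}_2 = 0$, and since $\bar{R}$ is generated in degree one, $\bar{R}_k = 0$ for all $k \geq 3$. In both non-evaluation situations the vanishing locus of $W$ on $X$ is empty, so a standard generic-choice argument over the infinite field $\RR$ produces $m+1$ linear forms in $W$ that form an hsop for $R$.

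The main obstacle is the $r \geq 2$ case: one needs the algebraic ``miracle'' that positive-definiteness of the degree-one multiplication, together with $\dim \bar{R}_2 = 1$ and associativity, forces $\bar{R}$ to be a finite-dimensional ring of socle degree two. A secondary subtlety is the ``phantom point'' situation in the $r = 1$ case, where $\tilde{p}$ satisfies the quadrics vanishing on $X$ without lying on the affine cone over $X$; this case is cleanly absorbed into the hsop alternative.
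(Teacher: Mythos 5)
Your proof is correct, and it takes a genuinely different route from the paper's. Both arguments hinge on Observation~\ref{ob:extreme}, but you use it differently. The paper argues geometrically on the common zero locus of $W := \Ker\bigl(\sigma^*(\ell)\bigr)$ in $X$: a common real zero forces $\sigma^*(\ell) = \lambda (\tilde{p}^*)^2$ directly; a common zero with nonzero complex part is ruled out by comparing $\ell$ with the auxiliary functional $\ell'(f) = \operatorname{Re}\bigl(f(\tilde{\zeta})\bigr)$, whose associated form is indefinite; the remaining case is the Nullstellensatz. You instead extract from extremality the single algebraic fact that $(W \cdot R_1)^{\perp} = \RR \ell$, hence $\dim \bigl( R/(W) \bigr)_2 = 1$, and then let commutative algebra eliminate the complex zeros: for rank $r \geq 2$, an orthogonal basis of $\bar{R}_1$ plus associativity gives $u \cdot e_i = e_j(e_j e_i) = 0$, so $\bar{R}_{\geq 3} = 0$, the quotient is Artinian, and $\variety(W) \cap X = \emptyset$ over $\CC$ without ever inspecting a complex point; the rank-one case splits into evaluation at a real point of $X$ or a phantom point of $\variety(I_2) \setminus X$, which is absorbed into the hsop alternative. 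The paper's route is shorter and more geometric; yours exposes extra structure---in the non-evaluation case $R/(W)$ is Artinian with Hilbert function $(1,r,1)$, which foreshadows the Gorenstein quotient with Hilbert function $(1,e,1)$ appearing in the proof of Proposition~\ref{pro:minimal}---and it shows that positive semidefiniteness is only needed to fix the sign in the rank-one case, since the $r \geq 2$ computation uses nothing beyond diagonalizability of a real symmetric form. The two steps you leave implicit (that the scalar $\lambda$ in Observation~\ref{ob:extreme} is nonzero because $\sigma^*$ is injective and $\ell \neq 0$, and that a linear system with empty zero locus on $X$ contains an hsop by prime avoidance over the infinite field $\RR$) are routine and at the same level of detail as the paper's own appeal to the Nullstellensatz.
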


\begin{proof}
  First, suppose that the linear forms in $\Ker \bigl( \sigma^*(\ell) \bigr)$
  have a common real zero $p \in X(\RR)$.  Choose an affine representative
  $\tilde{p} \in \AA^{n+1}(\RR)$.  If $\sigma^* \bigl( (\tilde{p}^*)^2 \bigr)
  \in \Sym^2(R_1^*)$ is the associated symmetric form, then we have $\Ker \bigl(
  \sigma^*(\ell) \bigr) \subseteq \Ker \bigl( \sigma^* \bigl( (\tilde{p}^*)^2
  \bigr) \bigr)$.  Since $\ell \in \Sos_X^*$ generates an extremal ray,
  Observation~\ref{ob:extreme} implies that $\sigma^*(\ell) = \lambda
  (\tilde{p}^*)^2$ for some $\lambda \in \RR$.  As both $\sigma^*(\ell)$ and
  $(\tilde{p}^*)^2$ are positive semidefinite, it follows that $\lambda > 0$.
  Hence, by changing the affine representative for $p \in X(\RR)$ to
  $\sqrt{\lambda} \tilde{p} \in \AA^{n+1}(\RR)$, we obtain $\ell =
  (\tilde{p}^*)^2$.

  Now, assume that the only common zeroes for the linear forms in $\Ker \bigl(
  \sigma^*(\ell) \bigr)$ have a nonzero complex part.  Choose an affine
  representative $\tilde{\zeta} \in \AA^{n+1}(\CC)$ for one of these complex
  zeroes.  Define $\ell' \in R_2^*$ by $\ell'(f) := \operatorname{Re}\bigl(
  f(\tilde{\zeta}) \bigr)$ to be the real part of the evaluation of $f$ at
  $\tilde{\zeta}$; this is well-defined because $\zeta \in X$.  By construction,
  we have $\Ker \bigl( \sigma^*(\ell) \bigr) \subseteq \Ker \bigl(
  \sigma^*(\ell') \bigr)$.  Since $\ell \in \Sos_X^*$ generates an extremal ray,
  Observation~\ref{ob:extreme} implies that $\sigma^*(\ell) = \lambda
  \sigma^*(\ell')$ for some $\lambda \in \RR$.  However, there exist $g_1, g_2
  \in R_1$ such that $g_1(\tilde{\zeta}) = 1$ and $g_2(\tilde{\zeta}) =
  \sqrt{-1}$, so $\ell'(g_1^2) = 1$ and $\ell'(g_2^2) = -1$.  Hence,
  $\sigma^*(\ell)$ is not positive semidefinite, which by Lemma~\ref{lem:cones}
  contradicts the hypothesis that $\ell \in \Sos_X^*$.  In other words, our
  assumption guarantees that the linear forms in $\Ker \bigl( \sigma^*(\ell)
  \bigr)$ have no common zeroes in $X$.  Therefore, we conclude that $\Ker \bigl(
  \sigma^*(\ell) \bigr) \subseteq R_1$ contains a homogeneous system of
  parameters via the Nullstellensatz.
\end{proof}

\section{Separating the Fundamental Cones}
\label{sec:non-sos}

\noindent
This section investigates differences between the sums-of-squares cone $\Sos_X$
and the nonnegative cone $\Pos_X$.  It relates the positivity of an algebraic
invariant associated to an embedded variety $X \subseteq \PP^n$ with the proper
inclusion of $\Sos_X$ in $\Pos_X$.  We construct witnesses that separate
$\Sos_X$ and $\Pos_X$.  Moreover, we give a general procedure for constructing
nonnegative real quadratic forms on $X$ that are not sums of squares.

Emulating \S5 in \cite{Zak}, we define the \define{quadratic deficiency} of the
subvariety $X \subseteq \PP^n$ to be $\varepsilon(X) := \binom{e+1}{2} - \dim
(I_2)$ where $e := \codim(X)$ and $I$ is the unique saturated homogeneous ideal
vanishing on $X$.  The first lemma provides a couple elementary
reinterpretations for this numerical invariant and recounts the important
connection between $\varepsilon(X)$ and varieties of minimal degree.

\begin{lemma}
  \label{lem:deficiency}
  The quadratic deficiency $\varepsilon(X)$ equals the coefficient of the
  quadratic term in the numerator of the Hilbert series for $X$ and
  $\varepsilon(X) = \dim (R_2) - (m+1)(n+1) + \binom{m+1}{2}$.  Moreover,
  $\varepsilon(X)$ is nonnegative and we have $\varepsilon(X) = 0$ if and only
  $\deg(X) = 1 + \codim(X)$.
\end{lemma}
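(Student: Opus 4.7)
The plan is to treat the four assertions in order, using the Hilbert series of $R$ as the main tool. Write $H_R(t) := \sum_{j \geq 0} \dim(R_j)\, t^j = Q(t)/(1-t)^{m+1}$ with $Q(t) = h_0 + h_1 t + h_2 t^2 + \cdots$. Expanding $(1-t)^{-(m+1)}$ as a binomial series gives the identity $\dim(R_j) = \sum_{i=0}^{j} h_i \binom{m+j-i}{m}$. Evaluating at $j=0$ gives $h_0 = 1$; at $j=1$ with $\dim(R_1) = n+1$ (by nondegeneracy) gives $h_1 = e$; and at $j=2$ together with $\dim(R_2) = \binom{n+2}{2} - \dim(I_2)$ identifies $h_2 = \binom{e+1}{2} - \dim(I_2) = \varepsilon(X)$. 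The elementary identity $(m+1)(n+1) - \binom{m+1}{2} = \binom{m+2}{2} + e(m+1)$ then rearranges this into the alternative expression for $\varepsilon(X)$.

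For the nonnegativity and the equality characterization, the strategy is to reduce to the case of a curve. Because $R$ is a domain, any nonzero linear form $\theta \in R_1$ is a nonzerodivisor, so the short exact sequence $0 \to R[-1] \xrightarrow{\theta} R \to R/\theta R \to 0$ yields $H_{R/\theta R}(t) = (1-t)\, H_R(t)$; comparing Krull dimensions, the numerator $Q(t)$ is unchanged and in particular $h_2$ is preserved. For sufficiently generic $\theta$, Bertini's theorem ensures the hyperplane section $X \cap \variety(\theta)$ is again irreducible and nondegenerate whenever $m \geq 2$, so iterating reduces us to the case where $X = C \subseteq \PP^{e+1}$ is an irreducible nondegenerate curve of degree $d$.

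For such a curve, one further generic hyperplane section produces a reduced set $Y$ of $d$ points in $\PP^e$ that lies in linearly general position by the classical general position theorem. Using that $\theta$ is a nonzerodivisor on $R_C$, we have $\dim(R_C/\theta R_C)_2 = h_C(2) - h_C(1)$, and the surjection $R_C/\theta R_C \twoheadrightarrow R_Y$ forces $h_C(2) - h_C(1) \geq \hh_Y(2)$. A standard Castelnuovo-style lemma for points in linearly general position in $\PP^e$ gives $\hh_Y(2) \geq \min(d,\, 2e+1)$. Substituting $h_C(2) - h_C(1) = 1 + e + h_2$ and the elementary bound $d \geq e+1$ yields $h_2 \geq \min(d - 1 - e,\, e) \geq 0$, which establishes nonnegativity. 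Moreover, if $h_2 = 0$ and $e \geq 1$ the inequality forces $d = e + 1$, so $\deg(X) = 1 + \codim(X)$; the degenerate case $e = 0$ is just $X = \PP^n$, which has $\varepsilon(X) = 0$ trivially.

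The reverse implication---that minimal degree forces $\varepsilon(X) = 0$---is where I expect the main obstacle, since the curve argument only produces a lower bound on $h_2$. My planned route is to invoke the classical classification of varieties of minimal degree \cite{EisenbudHarris}: all such varieties are arithmetically Cohen--Macaulay, so every coefficient $h_i$ of $Q(t)$ is nonnegative. Combined with $\sum_i h_i = Q(1) = \deg(X) = 1 + e = h_0 + h_1$, this forces $h_i = 0$ for all $i \geq 2$, and in particular $h_2 = \varepsilon(X) = 0$. (As a self-contained alternative, one could iterate generic hyperplane sections to reduce to a curve of minimal degree $d = n$ in $\PP^n$, show by Castelnuovo's genus bound that such a curve has genus zero hence is a rational normal curve, and then compute its $h$-vector $(1, e, 0, \dotsc, 0)$ directly.)
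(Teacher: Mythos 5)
Your treatment of the first sentence is the same computation the paper performs: expand $(1-t)^{-(m+1)}$, compare coefficients in degree two, and rearrange using $\dim(R_2)=\binom{n+2}{2}-\dim(I_2)$; that part is correct. For the second sentence the paper gives no argument of its own---it cites Castelnuovo's Lemma via Theorem~1.2 of \cite{Lvovsky} and Corollaries~5.4 and 5.8 of \cite{Zak}---so your hyperplane-section argument is a genuinely different, essentially self-contained route, and the inputs you use (the general position theorem for a generic hyperplane section of an irreducible nondegenerate curve, and the estimate that $d$ points in linearly general position in $\PP^e$ impose at least $\min(d,2e+1)$ conditions on quadrics) are exactly the classical facts underlying those references. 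Your arithmetic $1+e+\hh^*_2 \geq \min(d,2e+1)$ and the resulting dichotomy are correct.

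One step needs repair. You assert that the numerator $Q(t)$ is ``unchanged'' under generic hyperplane section and then iterate. The identity $H_{R/\theta R}(t)=(1-t)H_R(t)$ holds because $R$ is a domain, but $R/\theta R$ need not be the saturated homogeneous coordinate ring of $X\cap\variety(\theta)$ and in particular need not be a domain, so at the next step a generic linear form can be a zerodivisor; the clean equality $\dim\bigl(R/(\theta_1,\dotsc,\theta_m)R\bigr)_2=1+e+\varepsilon(X)$ requires the $\theta_i$ to form a regular sequence, which is automatic only when $R$ is arithmetically Cohen--Macaulay. What is true in general is that the saturated coordinate ring of $X\cap\variety(\theta)$ is a quotient of $R/\theta R$, so $\hh^*_2$ can only \emph{decrease} when passing to the hyperplane section. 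Fortunately this inequality points the way you need: iterating gives an integral curve $C$ of degree $d=\deg(X)$ with $\hh^*_2(C)\leq\varepsilon(X)$, and your Castelnuovo estimate applied to the (saturated, hence domain) coordinate ring of $C$ yields $\hh^*_2(C)\geq\min(d-1-e,e)\geq 0$ and forces $d=e+1$ when $\varepsilon(X)=0$. Note that this one-sided monotonicity means your parenthetical ``self-contained alternative'' for the converse (computing the $h$-vector of a rational normal curve and lifting back to $X$) does not close up without Cohen--Macaulayness; your primary route---minimal-degree varieties are arithmetically Cohen--Macaulay, so the $h$-vector is nonnegative and sums to $\deg(X)=1+e=h_0+h_1$---is the correct one and is consistent with the paper, which invokes \cite{EG} for the same fact in the proof of Proposition~\ref{pro:if}.
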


\begin{proof}
  Since $X$ is nondegenerate, we have $\dim(R_0) = 1$ and $\dim(R_1) = n+1$.
  Hence, there exists a polynomial $1 + e \, t + \hh^*_2(X) \, t^2 + \dotsb +
  \hh^*_n(X) \, t^{n} \in \ZZ[t]$ such that
  \begin{align*}
    \sum_{j \geq 0} \dim (R_j) \, t^j &= \frac{1 + e \, t + \hh^*_2(X) \, t^2 +
      \dotsb + \hh^*_{n}(X) \, t^{n}}{(1-t)^{m+1}} \, .
  \end{align*}
  Using the binomial theorem to compare the coefficients of the degree-two
  terms, we obtain
  \begin{align*}
    \dim(R_2) &= \tbinom{m+2}{2} + e \tbinom{m+1}{1} + \hh^*_2(X)
    \tbinom{m+0}{0} = \tbinom{m+1}{2} + \tbinom{m+1}{1} + (n-m)(m+1) +
    \hh^*_2(X) \\
    &= \tbinom{m+1}{2} -m(m+1) + (n+1)(m+1) + h^*_2 = - \tbinom{m+1}{2} +
    (n+1)(m+1) + \hh^*_2(X) \, .
  \end{align*}
  Rearranging this equation and using the presentation for $R$ yields
  \begin{alignat*}{2}
    \hh^*_2(X) &= \dim (R_2) - (m+1)(n+1) + \tbinom{m+1}{2} = \tbinom{n+2}{2} -
    \dim (I_2) -(m+1)(n+1) + \tfrac{(m+1)m}{2} \\
    &= \tfrac{(n-m+1)(n-m)}{2} - \dim(I_2) = \tbinom{e+1}{2} - \dim(I_2) =
    \varepsilon(X) \, ,
  \end{alignat*}
  which establishes the results in the first sentence of the lemma.  Both parts
  of the second sentence are well-known.  As Theorem~1.2 in \cite{Lvovsky}
  indicates, they can be deduced from Castelnuovo's Lemma, which states that if
  $n(n-1)/2$ linearly independent quadrics pass through at least $2n+3$ points
  in linearly general position in $\PP^n$, then these points lie on a rational
  normal curve.  Corollary~5.4 and Corollary~5.8 in \cite{Zak} give alternative
  proofs using properties of secant varieties.
\end{proof}

The subsequent proposition extends both Theorem~1.1 and Theorem~1.2 in
\cite{Blekherman} and provides one of the implications needed for the proof of
Theorem~\ref{thm:main}.

\begin{proposition}
  \label{pro:onlyif}
  If $\varepsilon(X) > 0$, then $\Sos_X$ is a proper subset of $\Pos_X$.
\end{proposition}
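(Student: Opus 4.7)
The plan is to produce, by the duality in Lemma~\ref{lem:cones}, a nonzero functional $\ell \in \Sos_X^* \setminus \Pos_X^*$; this gives $\Sos_X \subsetneq \Pos_X$. The key idea is to apply Lemma~\ref{lem:hsop} in reverse: it suffices to construct a nonzero $\ell \in \Sos_X^*$ whose kernel $\Ker \bigl( \sigma^*(\ell) \bigr) \subseteq R_1$ contains a homogeneous system of parameters $f_1, \ldots, f_{m+1}$ on $R$ with no common zero on $X(\RR)$. For any putative representation $\ell = \sum_i \lambda_i (\tilde p_i^*)^2$ with $\lambda_i > 0$ and $p_i \in X(\RR)$, the kernel of the resulting PSD sum equals $\bigcap_i \{g \in R_1 : g(\tilde p_i) = 0\}$; containment of the $f_j$'s would then force each $p_i$ to be a common real zero of the $f_j$'s, contradicting our choice.

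The construction proceeds in two stages. First, select a sufficiently general $(m+1)$-dimensional subspace $W = \langle f_1, \ldots, f_{m+1}\rangle \subseteq R_1$ so that the $f_j$ form an h.s.o.p.\ on $R$ and the finite intersection $\variety(W) \cap X(\RR)$ is empty. Setting $A := R/(W)$, Lemma~\ref{lem:deficiency} gives $\dim A_1 = e$ and $\dim A_2 = \varepsilon(X) > 0$, and the dual of the surjective multiplication $\Sym^2 A_1 \twoheadrightarrow A_2$ exhibits an $\varepsilon(X)$-dimensional subspace $V \subseteq \Sym^2 A_1^*$. Second, I would find a nonzero $\bar\ell \in A_2^*$ whose image in $V$ is a positive semidefinite bilinear form on $A_1$; lifting this $\bar\ell$ to $\ell \in R_2^*$ then supplies the required witness. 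My approach to producing such $\bar\ell$ is via continuity: take $\ell_0$ in the relative interior of $\Sos_X^*$ (so that $\sigma^*(\ell_0)$ is positive definite), deform within $\Sos_X^*$ along a half-line whose direction lies in the $\varepsilon(X)$-dimensional linear subspace $\mathcal{L}(W) := \sigma^*(R_2^*) \cap \{B : W \subseteq \Ker B\}$, and extract a limiting element on the exposed face $F(W) := \{\ell \in \Sos_X^* : W \subseteq \Ker \sigma^*(\ell)\}$.

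The principal obstacle is twofold: ensuring the existence of an h.s.o.p.\ $W$ with $\variety(W) \cap X(\RR) = \emptyset$, and guaranteeing that the $\varepsilon(X)$-dimensional subspace $V$ meets the PSD cone nontrivially. The first condition may fail when the real topology of $X$ forces every codimension-$(m+1)$ linear subspace to contain real common zeros (as occurs, e.g.,\ for odd-degree irreducible curves); in that case, one allows a controlled finite set $q_1, \ldots, q_s \subseteq \variety(W) \cap X(\RR)$ of real common zeros and argues that $\dim F(W)$ exceeds $s$, so that $F(W)$ contains extremal rays beyond the point evaluations $(\tilde q_i^*)^2$. The second condition is the heart of the matter and is handled by the explicit Hilbert-style construction formalized in Procedure~\ref{proc:hilbert}, which uses the positivity of $\varepsilon(X)$ to realize directly the nonnegative quadratic form that is not a sum of squares.
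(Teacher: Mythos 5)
Your overall strategy---separating the cones dually by exhibiting a nonzero $\ell \in \Sos_X^* \setminus \Pos_X^*$ that annihilates a strictly positive sum of squares---is the same as the paper's, and your observation that such an $\ell$ cannot be a nonnegative combination of point evaluations is correct. But the proof has a genuine gap exactly at its central step: you never actually produce a \emph{nonzero} element of the face $F(W) = \{\ell \in \Sos_X^* : W \subseteq \Ker\sigma^*(\ell)\}$. Knowing that the annihilator of $W\cdot R_1$ in $R_2^*$ has dimension at least $\varepsilon(X) > 0$ only gives you a linear subspace; a linear subspace can meet the cone $\Sos_X^*$ solely at the origin, so positivity of $\varepsilon(X)$ alone does not force $F(W) \neq \{0\}$. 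Your proposed remedy by deformation does not work as stated: starting from a positive definite $\ell_0$ and moving along a direction $v$ with $W \subseteq \Ker\sigma^*(v)$, the segment exits $\Sos_X^*$ at some boundary point $\ell_0 + t^* v$ whose kernel is merely nonzero---there is no reason it contains $W$ (that would require the ray to escape to infinity inside the cone, i.e.\ $v$ itself to be positive semidefinite, which is precisely what is in question). Your fallback---``the heart of the matter \dots is handled by Procedure~\ref{proc:hilbert}''---is circular in this context: that procedure is the primal form of the very statement you are proving (its correctness is established in the paper \emph{after}, and partly using, the proof of Proposition~\ref{pro:onlyif}), so you cannot defer the key existence step to it. A smaller issue: $\dim A_2 = \varepsilon(X)$ requires the h.s.o.p.\ to be a regular sequence, which is not assumed; in general the Koszul complex only gives $\dim A_2 \geq \varepsilon(X)$, though that inequality would suffice for your purposes.

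The paper fills exactly this gap with an explicit construction. Cutting $X$ by $m$ general hyperplanes $h_1,\dotsc,h_m$ yields $\deg(X) \geq e+2$ reduced points in linearly general position in $\PP^e$, at least $e+1$ of them real; since $e+2$ evaluation functionals on a $\PP^e$ are linearly dependent, one gets a relation $0 = \sum_j \lambda_j \tilde p_j^*$ with all $\lambda_j \neq 0$. The functional $\ell := \sum_{j=1}^{e+1}\kappa_j(\tilde p_j^*)^2 - \kappa_{e+2}(\tilde p_{e+2}^*)^2$, with $\kappa_{e+2}$ calibrated against the $\lambda_j$, is shown to be nonnegative on squares by the Cauchy--Schwarz inequality (with a conjugate-pair variant when fewer than $e+2$ of the points are real), yet it annihilates $g^2 + h_1^2 + \dotsb + h_m^2$ for a suitable $g$ not vanishing on $Z$, which is strictly positive on $X(\RR)$. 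That Cauchy--Schwarz step is the substantive content your proposal is missing; without it (or an equivalent argument that $F(W)$ has dimension exceeding the number of real common zeros of $W$), the proof does not go through.
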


\begin{proof}
  Since $\varepsilon(X) > 0$, it follows from Lemma~\ref{lem:deficiency} that
  $\deg(X) > 1 + \codim(X)$.  We begin by showing that there exists $h_1, h_2,
  \dotsc, h_m \in R_1$ such that $Z := X \cap \variety(h_1, h_2, \dotsc, h_m)$
  is a reduced set of points in linearly general position containing at least $e
  + 1$ distinct real points.  To achieve this, observe that B\'ezout's Theorem
  implies that the intersection of a positive-dimensional irreducible
  nondegenerate variety with a general hyperplane is nondegenerate; see
  Proposition~18.10 in \cite{Harris}.  Next, Bertini's Theorem (e.g.\
  Th\'eor\`eme~6.3 in \cite{J}) establishes that a general hyperplane section of
  a geometrically integral variety of dimension at least 2 is geometrically
  integral and that a general hyperplane section of a geometrically reduced
  variety is geometrically reduced.  Thirdly, we see that a geometrically
  integral real variety is totally real if and only if it contains a nonsingular
  real point; see \S1 in \cite{Becker}.  Finally, we note that the locus of
  hyperplanes that intersect the nonsingular locus of $X$ transversely contains
  a nonempty Zariski open set.  By combining these four observations, we deduce
  that the intersection of $X$ with $m-1$ general hyperplanes yields a
  nondegenerate geometrically integral totally-real curve $C$ in $\variety(h_1)
  \cap \variety(h_2) \cap \dotsb \cap \variety(h_{m-1}) \cong \PP^{e+1}$.  The
  degree of $C$, which equals $\deg(X)$, is at least $e + 1$; see
  Corollary~18.12 in \cite{Harris}.  Any set of $e+1$ distinct real points on
  $C$ lie in a real hyperplane.  Since $C$ is nondegenerate and totally-real,
  the locus of hyperplanes intersecting $C$ in at least $e + 1$ distinct real
  points has dimension at least $e + 1$.  Hence, there exists a hyperplane
  $\variety(h_m)$ such that intersection with $C$ is a set of points in linearly
  general position containing at least $e + 1$ distinct real points.

  To complete the proof, we use points in $Z$ to exhibit a linear functional in
  $\Sos_X^* \setminus \Pos_X^*$.  We divide the analysis into two cases.  In the
  first case, we assume that the intersection $Z$ contains at least $e+2$
  distinct real points.  Choose an affine representative $\tilde{p}_j$ where $1
  \leq j \leq e+2$ for each of these points.  The points lie in $\variety(h_1)
  \cap \variety(h_2) \cap \dotsb \cap \variety(h_m) \cong \PP^{e}$, so the
  evaluations $\tilde{p}_j^*$ satisfy a linear equation in $R_1^*$.  The
  coefficients in this linear equation are nonzero and determine a unique point
  in $\PP^{e+1}$ because $p_1, \dotsc, p_{e+2}$ are in linearly general
  position.  Specifically, there are unique nonzero $\lambda_1, \lambda_2,
  \dotsc, \lambda_{e+1} \in \RR$ such that
  \begin{align}
    \label{eq:firstCB}
    0 &= \lambda_1^{\,} \tilde{p}_1^* + \lambda_2^{\,} \tilde{p}_2^* + \dotsb +
    \lambda_{e+1} \tilde{p}_{e+1}^* + \tilde{p}_{e+2}^* \, .
  \end{align}
  Fix $\kappa_j > 0$ for $1 \leq j \leq e+1$, set $\kappa_{e+2} := \bigl(
  \frac{\lambda_1^2}{\kappa_1} + \frac{\lambda_2^2}{\kappa_2} + \dotsb +
  \frac{\lambda_{e+1}^2}{\kappa_{e+1}} \bigr)^{-1}$, and consider
  \[
  \ell := \kappa_1^{} (\tilde{p}_1^*)^2 + \kappa_2^{} (\tilde{p}_2^*)^2 + \dotsb
  + \kappa_{e+1}^{} (\tilde{p}_{e+1}^*)^2 - \kappa_{e+2}^{}
  (\tilde{p}_{e+2}^*)^2 \in R_2^* \, .
  \]  
  Since $\kappa_j > 0$ for all $1 \leq j \leq e+1$, equation \eqref{eq:firstCB}
  yields
  \begin{align*}
    \ell &= \sum_{j=1}^{e+1} \bigl( \sqrt{\kappa_j^{}} \tilde{p}_j^* \bigr)^2 -
    (\tilde{p}_{e+2}^*)^2 \Bigl( \sum_{j=1}^{e+1} \bigl(
    \tfrac{\lambda_j}{\sqrt{\kappa_j}} \bigr)^{\! 2} \Bigr)^{\! -1} \\ &= \Bigl(
    \sum_{j=1}^{e+1} \bigl( \tfrac{\lambda_j}{\sqrt{\kappa_j}} \bigr)^{\! 2}
    \Bigr)^{\! -1} \left[ \Bigl( \sum_{j=1}^{e+1} \bigl(
      \tfrac{\lambda_j}{\sqrt{\kappa_j}} \bigr)^{\! 2} \Bigr) \Bigl(
      \sum_{j=1}^{e+1} \bigl( \sqrt{\kappa_j^{}} \tilde{p}_j^* \bigr)^{\! 2}
      \Bigr) - \Bigl( \sum_{j=1}^{e+1} \lambda_j^{} \tilde{p}_j^* \Bigr)^{\!\!
        2} \right] \, .
  \end{align*}
  Hence, the Cauchy-Schwartz inequality shows that $\ell$ is nonnegative on
  squares, whence $\ell \in \Sos_X^*$ by Lemma~\ref{lem:cones} (cf.\ Theorem~6.1
  in \cite{Blekherman}).  Nevertheless, there exists $g \in R_1$ such that
  $\tilde{p}_j^*(g) = g(\tilde{p}_j) = \lambda_j^{\,} \kappa_j^{-1}$ for all $1
  \leq j \leq e+1$, which implies that $\ell(g^2) = 0$.  In addition, choose the
  $\kappa_j$ for $1 \leq j \leq e+1$ so that $g$ does not vanish at any point in
  $Z$.  Since $g^2 +h_1^2 + h_2^2 + \dotsb + h_m^2$ is strictly positive on $X$
  and $\ell(g^2 + h_1^2 + \dotsb + h_m^2) = 0$, the linear functional $\ell$
  cannot be a nonnegative combination of points evaluations at $X(\RR)$.
  Therefore, we have $\ell \in \Sos_X^* \setminus \Pos_X^*$.

  In the second case, we assume that $Z$ has at most $e+1$ distinct real points.
  Since $\deg(X) \geq e+1$, the reduced set $Z$ contains at least one pair of
  complex conjugate points.  Let $\tilde{a} \pm \tilde{b} \sqrt{-1} \in
  \AA^{n+1}(\CC)$, where $\tilde{a}, \tilde{b} \in \AA^{n+1}(\RR)$, be affine
  representatives for such a pair and choose an affine representative
  $\tilde{p}_j$ for $1 \leq j \leq e$ for some real points in $Z$.  As in the
  other case, the chosen $e+2$ points lie in $\variety(h_1) \cap \variety(h_2)
  \cap \dotsb \cap \variety(h_m) \cong \PP^{e}$, so the evaluations satisfy a
  linear equation in $R_1^*$.  Again, the coefficients are nonzero and determine
  a unique point $\PP^{e+1}$ because the points in $Z$ are in linearly general
  position.  Since the unique linear equation is invariant under conjugation,
  the coefficients are real and the coefficients of $(\tilde{a} + \tilde{b}
  \sqrt{-1})^*$ and $(\tilde{a} - \tilde{b} \sqrt{-1})^*$ are equal.
  Specifically, there are unique nonzero $\lambda_1, \lambda_2, \dotsc,
  \lambda_{e} \in \RR$ such that
  \begin{align} 
    \label{eq:secondCB}
    \begin{split}
      0 &= \lambda_1^{} \tilde{p}_1^* + \lambda_2^{} \tilde{p}_2^* + \dotsb +
      \lambda_{e}^{} \tilde{p}_{e}^* + \tfrac{1}{2}(\tilde{a} + \tilde{b}
      \sqrt{-1})^* + \tfrac{1}{2} (\tilde{a} - \tilde{b} \sqrt{-1})^* \\
      &= \lambda_1^{} \tilde{p}_1^* + \lambda_2^{} \tilde{p}_2^* + \dotsb +
      \lambda_{e}^{} \tilde{p}_{e}^* + \tilde{a}^*
    \end{split}
  \end{align}
  Taking the real and imaginary parts of $\bigl( (\tilde{a} \pm \tilde{b}
  \sqrt{-1})^* \bigr)^2 \in R_2^*$ yields the linear independent real
  functionals $(\tilde{a}^*)^2 - (\tilde{b}^*)^2 \in R_2^*$ and $2\tilde{a}^*
  \tilde{b}^* \in R_2^*$.  Fix $\kappa_j > 0$ for $1 \leq j \leq e$, choose
  $\kappa_{e+1}$ and $\kappa_{e+2}$ satisfying $(\kappa_{e+1}^2+
  \kappa_{e+2}^2)\kappa_{e+1}^{-1} := \bigl( \frac{\lambda_1^2}{\kappa_1} +
  \frac{\lambda_2^2}{\kappa_2} + \dotsb + \frac{\lambda_{e}^2}{\kappa_{e}}
  \bigr)^{-1}$, and consider
  \[
  \ell := \kappa_1^{} (\tilde{p}_1^*)^2 + \kappa_2^{} (\tilde{p}_2^*)^2 + \dotsb
  + \kappa_{e} (\tilde{p}_{e}^*)^2 - \kappa_{e+1}^{} \bigl( (\tilde{a}^*)^2 -
  (\tilde{b}^*)^2 \bigr) + \kappa_{e+2}^{} (2 \tilde{a}^* \tilde{b}^*) \in R_2^*
  \, .
  \]  
  Completing the square and using equation \eqref{eq:secondCB} yields
  \begin{align*}
    \ell &= \sum_{j=1}^{e} \bigl( \sqrt{\kappa_j^{}} \tilde{p}_j^* \bigr)^2 -
    \tfrac{\kappa_{e+1}^2+\kappa_{e+2}^2}{\kappa_{e+1}} (\tilde{a}^*)^2 +
    \kappa_{e+1}^{} \Bigl( \tilde{b}^* + \tfrac{\kappa_{e+2}}{\kappa_{e+1}}
    \tilde{a}^* \Bigr)^{\!\! 2} \\
    &= \Bigl( \sum_{j=1}^{e} \bigl( \tfrac{\lambda_j}{\sqrt{\kappa_j}}
    \bigr)^{\! 2} \Bigr)^{\! -1} \left[ \Bigl( \sum_{j=1}^{e} \bigl(
      \tfrac{\lambda_j}{\sqrt{\kappa_j}} \bigr)^{\! 2} \Bigr) \Bigl(
      \sum_{j=1}^{e} \bigl( \sqrt{\kappa_j^{}} \tilde{p}_j^* \bigr)^{\! 2} \Bigr) -
      \Bigl( \sum_{j=1}^{e} \lambda_j \tilde{p}_j^* \Bigr)^{\!\! 2} \right] +
    \kappa_{e+1}^{} \Bigl( \tilde{b}^* + \tfrac{\kappa_{e+2}}{\kappa_{e+1}}
    \tilde{a}^* \Bigr)^{\!\! 2} \, .
  \end{align*}
  Since we have $\kappa_{e+1} > 0$, the Cauchy-Schwartz inequality once more
  shows that $\ell$ is nonnegative on squares (cf.\ Theorem~7.1 in
  \cite{Blekherman}).  By repeating the argument above, we conclude that $\ell
  \in \Sos_X^* \setminus \Pos_X^*$.
\end{proof}

By enhancing the techniques used in the proof of Proposition~\ref{pro:onlyif},
we obtain a way to construct nonnegative polynomials that are not sums of
squares.  We describe this process below.  To make it computationally effective,
one needs an explicit bound for the coefficient $\delta$.

\begin{procedure}[Nonnegative polynomials that are not sums of squares]
  \label{proc:hilbert}
  Given an $m$-dimensional nondegenerate totally-real subvariety $X \subseteq
  \PP^n$ such that $\varepsilon(X) > 0$, the following steps yield a polynomial
  lying in $\Pos_X \setminus \Sos_X$.
  \begin{itemize}
  \item[Step~1:] Choose general linear forms $h_1, h_2, \dotsc, h_m \in R_1$
    which intersect in $\deg(X)$ distinct points in linearly general position
    where at least $e+1$ are real and smooth.  Fix $e$ smooth real points in the
    intersection and choose an additional linear form $h_0 \in R_1$ that
    vanishes only at the selected intersection points.  Let $L$ be the ideal in
    $R$ generated by $h_0, h_1, \dotsc, h_m$.
  \item[Step~2:] Choose a quadratic form $f \in R \setminus L^2$ that vanishes
    to order at least two at each of the selected intersection points.
  \item[Step~3:] For every sufficiently small $\delta > 0$, the polynomial
    $\delta f + h_0^2 + h_1^2 + \dotsb + h_m^2$ is nonnegative on $X$ but not a
    sum of squares.
  \end{itemize}
\end{procedure}

\begin{proof}[Correctness]  
  The existence of the $h_0, h_1, \dotsc, h_m$ in Step~1 follows from the first
  paragraph in the proof of Proposition~\ref{pro:onlyif}.  The quadratic forms
  in $L^2$ have dimension at most $\binom{m+2}{2}$.  Since second-order
  vanishing at $e$ distinct points imposes at most $(m+1)e$ linear conditions,
  Lemma~\ref{lem:deficiency} implies that the vector space of suitable $f$ has
  dimension at least
  \begin{align*}
    \dim(R_2) - (m+1)e - \tbinom{m+2}{2} &= \dim(R_2) - (m+1)\bigl( (n+1) -
    (m+1) \bigr) - \tbinom{m+2}{2} \\
    &= \dim(R_2) - (m+1)(n+1) + \tbinom{m+1}{2} = \varepsilon(X) \, ,
  \end{align*}
  which justifies Step~2.  For Step~3, suppose that $\delta f + h_0^2 + h_1^2 +
  \dotsb + h_m^2 = g_1^2 + g_2^2 + \dotsb + g_k^2$ for some $g_j \in R_1$.  It
  follows that each $g_j$ vanishes at the selected intersection points.  The
  ideal $L$ contains all linear forms which vanish at the selected intersection
  points, so $(g_j)^2 \in L^2$.  However, this gives a contradiction because $f
  \not\in L^2$.

  Hence, it remains to show that for a sufficiently small $\delta$, the
  polynomial $\delta f + h_0^2 + h_1^2 + \dotsb + h_m^2$ is nonnegative on $X$.
  Let $\tilde{X} \subseteq \AA^{n+1}(\RR)$ denote the affine cone of $X$ and let
  $\tilde{p}_1, \tilde{p}_2, \dotsc, \tilde{p}_e \in S^{n} \cap \tilde{X}$ be
  the affine representatives with unit length for the selected intersection
  points.  Since the selected points are nonsingular on $X$, the compact set
  $S^{n} \cap \tilde{X}$ is a real $m$-dimensional smooth manifold near each
  $\tilde{p}_j$ and the differentiable function $h_0^2 + h_1^2 + \dotsb + h_m^2$
  has a positive definite Hessian at the points $\tilde{p}_j$.  Since the
  $\tilde{p}_j$ are zeroes and critical points for the quadratic form $f$, it
  follows that there exists a $\delta_0 > 0$ and an neighbourhood $U_j$ of
  $\tilde{p_j}$ in $S^n \cap \tilde{X}$ for $1 \leq j \leq e$ such that $f$ is
  nonnegative on $U_j$.  On the compact set $K'' := ( S^n \cap \tilde{X})
  \setminus \bigcup_j U_j$, the function $h_0^2 + h_1^2 + \dotsb + h_m^2$ is
  strictly positive, so $\delta_1 := (\inf_{K''} h_0^2 + h_1^2 + \dotsb +
  h_m^2)/(\sup_{K''} |f|)$ is a strictly positive real number.  Hence, if $0 <
  \delta < \min(\delta_0, \delta_1)$, then $\delta f + h_0^2 + h_1^2 + \dotsb +
  h_m^2$ is nonnegative on $S^n \cap \tilde{X}$ and $X$.
\end{proof}

\begin{remark}
  In our context, Procedure~\ref{proc:hilbert} is a generalization of an idea
  going back to Hilbert.  To be more precise, let $\nu_d \colon \PP^{n}
  \rightarrow \PP^r$ with $r = \binom{n+d}{n}-1$ denote the $d$-th Veronese
  embedding of $\PP^n$.  For the subvarieties $\nu_3(\PP^2) \subset \PP^{10}$
  and $\nu_2(\PP^3) \subset \PP^{10}$, Hilbert~\cite{Hilbert} uses a similar
  procedure to prove the existence of nonnegative polynomials that are not sums
  of squares.  By working with concrete forms, Robinson uses this procedure to
  construct his celebrated form, see \S4b in \cite{Reznick}.  Again for
  $\nu_3(\PP^2) \subset \PP^{10}$ and $\nu_2(\PP^3) \subset \PP^{10}$,
  \cite{BIK} shows that the form $f$ in Procedure~\ref{proc:hilbert} is unique
  up to a constant multiple (i.e.\ the dimension estimates are sharp), and
  expresses it in terms of the intersection points of the $h_j$.
\end{remark}

In the simplest where $\Sos_X \neq \Pos_X$, namely $\varepsilon(X) = 1$, we can
clarify the difference between $\Sos_X$ and $\Pos_X$.  Proposition~5.10 in
\cite{Zak} shows that $\varepsilon(X) = 1$ if and only if $X$ is either a
hypersurface of degree $d \geq 3$, or a linearly normal variety such that
$\deg(X) = 2 + \codim(X)$ (a.k.a.\ a variety of almost minimal degree).  Given
$\ell \in R^*_2$, recall from Section~2 that $\sigma^*(\ell)$ is the
corresponding symmetric bilinear map.  Let $I(\ell)$ be the Gorenstein ideal in
$R$ generated by all homogeneous $g \in R$ such that either $\ell(fg) = 0$ for
all $f \in R_{2 - \deg(g)}$ or $\deg(g) > 2$.

\begin{proposition}
  \label{pro:minimal}
  Assume that $X$ is arithmetically Cohen-Macaulay and $\varepsilon(X) = 1$.  If
  $\ell \in \Sos^*_X$ is an extremal ray not contained in $\Pos_X^*$, then the
  quadratic form $\sigma^*(\ell)$ is positive semidefinite with $\dim \Ker\bigl(
  \sigma^*(\ell) \bigr) = m+1$.  Dually, if $f$ lies in the boundary of $\Sos_X$
  and not in the boundary of $\Pos_X$, then the element $f$ can be expressed as
  a sum of $m+1$ squares, but not as a sum of fewer squares.
\end{proposition}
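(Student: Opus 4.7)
The plan is to prove the two assertions in sequence, obtaining the second from the first via convex duality.

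For the first, since $\ell \notin \Pos_X^*$, Lemma~\ref{lem:hsop} forces the subspace $K := \Ker\bigl(\sigma^*(\ell)\bigr) \subseteq R_1$ to contain a homogeneous system of parameters $h_0, h_1, \dotsc, h_m$ on $R$, giving $\dim K \geq m+1$.  For the matching upper bound, I would pass to the Artinian reduction $A := R/(h_0, h_1, \dotsc, h_m)$.  Because $X$ is arithmetically Cohen-Macaulay, the Hilbert function of $A$ equals the $\hh^*$-polynomial of $X$, so $\dim A_0 = 1$, $\dim A_1 = e$, and $\dim A_2 = \hh^*_2(X) = \varepsilon(X) = 1$.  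The functional $\ell$ descends to a nonzero $\bar{\ell} \in A_2^*$ (else $\ell$ vanishes on $R_1 \cdot R_1 = R_2$, forcing $\ell = 0$).  The Zak classification together with the ACM assumption restricts $X$ to be either a hypersurface of degree $d \geq 3$ or a Del Pezzo variety, both arithmetically Gorenstein, so $A$ is Artinian Gorenstein and the pairing $(\bar{g}_1, \bar{g}_2) \mapsto \bar{\ell}(\bar{g}_1 \bar{g}_2)$ on $A_1$ is non-degenerate.  Any $h \in K$ outside $\operatorname{span}(h_0, \dotsc, h_m)$ would project to a nonzero $\bar{h} \in A_1$ lying in the radical of this pairing, a contradiction.

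For the second, convex duality supplies an extremal $\ell \in \Sos_X^*$ with $\ell(f) = 0$, and the hypothesis $f \in \operatorname{int}(\Pos_X)$ excludes $\ell$ from $\Pos_X^*$.  The first assertion then gives $\dim K = m+1$ for $K := \Ker\bigl(\sigma^*(\ell)\bigr)$.  Since the HSOP spanning $K$ forms a regular sequence on the ACM ring $R$, the restriction $\sigma|_{\Sym^2(K)} \colon \Sym^2(K) \to R_2$ is injective.  Any positive semidefinite $Q \in \Sym^2(R_1)$ with $\sigma(Q) = f$ satisfies $\sigma^*(\ell)(Q) = \ell(f) = 0$; because both $\sigma^*(\ell)$ and $Q$ are positive semidefinite, $Q$ is supported on $K$, and injectivity of $\sigma|_{\Sym^2(K)}$ makes this lift unique.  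The rank of $Q$ equals the minimum number of summands in a sum-of-squares expression for $f$, yielding the upper bound $m+1$.

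The remaining obstacle, and the hardest step, is the matching lower bound, equivalent to $Q$ being positive definite on all of $K$.  My plan is a strict-complementarity argument: the hypothesis $f \in \operatorname{int}(\Pos_X)$ forces every extremal ray of the dual face of $\Sos_X^*$ annihilating $f$ to avoid $\Pos_X^*$, and the first assertion then gives each such kernel dimension $m+1$; showing that this dual face is generated by the single ray of $\ell$ identifies the support of $Q$ with all of $K$ and produces $\operatorname{rank}(Q) = m+1$.  This convex-geometric step, synthesizing the Gorenstein algebraic structure with the extremality of $\ell$, is where the proof demands the most care.
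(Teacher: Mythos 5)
Your proof of the first assertion is correct and is essentially the paper's argument: both obtain the lower bound $\dim \Ker\bigl(\sigma^*(\ell)\bigr) \geq m+1$ from Lemma~\ref{lem:hsop}, and both obtain the upper bound from the Gorenstein property of the Artinian reduction $R/(h_0,\dotsc,h_m)$ with Hilbert function $(1,e,1)$ --- you phrase this as non-degeneracy of the pairing $(\bar g_1,\bar g_2)\mapsto \bar\ell(\bar g_1\bar g_2)$ on $A_1$, the paper phrases it via the apolarity ideal $I(\ell)$ and the socle, but these are the same computation. (The paper cites Remark~4.5 of \cite{BS} for the fact that an arithmetically Cohen--Macaulay variety of almost minimal degree is arithmetically Gorenstein, which is the fact you are invoking.) Likewise, the upper bound in the second assertion is fine: every Gram tensor of $f$ is forced into $\Sym^2(K)$ by complementary slackness, $\sigma|_{\Sym^2(K)}$ is injective because the $h_j$ form a regular sequence, and a positive semidefinite form on an $(m+1)$-dimensional space has rank at most $m+1$.

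The genuine gap is the lower bound, i.e.\ that $f$ is not a sum of $m$ or fewer squares, which you explicitly leave as a plan rather than a proof. The plan does not close: you would need both that the dual face $\{\ell' \in \Sos_X^* : \ell'(f)=0\}$ is a single ray and that strict complementarity holds (i.e.\ $\operatorname{range}(Q) = \Ker\bigl(\sigma^*(\ell)\bigr)$ rather than merely $\operatorname{range}(Q) \subseteq \Ker\bigl(\sigma^*(\ell)\bigr)$), and neither is justified; strict complementarity is exactly the statement to be proved and is known to fail in general for projections of the positive semidefinite cone, so it cannot be obtained from convex geometry alone --- some real-geometric input about $X$ is required. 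The paper supplies this input as follows: if $f = g_1^2 + \dotsb + g_k^2$ with $k \leq m$, extend the $g_j$ by general linear forms of $\Ker\bigl(\sigma^*(\ell)\bigr)$ to a sequence of length $m$; since $f$ is strictly positive on $X(\RR)$ these forms have no common real zero on $X$, and after a perturbation they cut out a reduced set $Z$ of $\deg(X)$ points, none real. Then $\ell$ is a linear combination of evaluations at the points of $Z$, and positive semidefiniteness of $\sigma^*(\ell)$ forces (as in Corollary~4.3 of \cite{Blekherman}) $Z$ to contain at most one conjugate pair of complex points; since $\varepsilon(X)=1$ gives $\deg(X) = e+2 \geq 3$, the set $Z$ would have to contain a real point, a contradiction. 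This counting of real versus complex points of a linear section is the content your outline is missing.
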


\begin{proof}
  Lemma~\ref{lem:hsop} asserts that the subspace $\Ker \bigl( \sigma^*(\ell)
  \bigr) \subseteq R_1$ contains a homogeneous system of parameters $h_0, h_1,
  \dotsc, h_m$ on $R$.  Since $R$ is Cohen-Macaulay, this system of parameters
  is a regular sequence.  On the other hand, Remark~4.5 in \cite{BS} establishes
  that a projective variety of almost minimal degree is arithmetically
  Cohen-Macaulay if and only if it is arithmetically Gorenstein.  Hence, the
  quotient ring $R' := R/(h_0, h_2, \dotsc, h_m)$ is Gorenstein.
  Lemma~\ref{lem:deficiency} implies that the Hilbert function of $R'$ is
  $(1,e,1)$.  The ideal generated by the image of $I(\ell)$ in $R'$ under the
  canonical map is either trivial or contains the socle.  By definition, the
  elements in $I(\ell)_2$ are annihilated by $\ell$, so the second possibility
  cannot occur.  Hence, we have $I(\ell) = (h_0, h_2, \dotsc, h_m)$ and $\dim
  \Ker\bigl( \sigma^*(\ell) \bigr) = m+1$.

  If $f = g_1^2 + g_2^2 + \dotsb + g^2_k$ lies in the boundary of $\Sos_X$, then
  there exists an extremal ray $\ell \in \Sos^*_X$ such that $\ell(f) = 0$, so
  $(g_1, g_2, \dotsc, g_k) \subseteq \Ker \bigl( \sigma^*(\ell) \bigr)$.  Since
  $f$ is not in the boundary of $\Pos_X$, the element $f$ is strictly positive
  on $X(\RR)$ and $\ell$ is not defined by evaluation at a point.  The previous
  paragraph proves that $\dim \Ker\bigl( \sigma^*(\ell) \bigr) = m+1$ and this
  ensures that $f$ is a sum of at most $m+1$ squares.  To finish the proof,
  suppose that $f = g_1^2 + g_2^2 + \dotsb + g_k^2$ where $k \leq m$ and $g_1,
  g_2, \dotsc, g_k$ are linearly independent.  If $k < m$, then choose general
  linear forms $g_{k+1}, g_{k+2}, \dotsc, g_m$ in $\Ker \bigl( \sigma^*(\ell)
  \bigr)$.  Since $f$ strictly positive on $X(\RR)$, the ideal $J$ generated by
  $g_1, g_2, \dotsc, g_m$ defines a subscheme of $X$ that has no real zeroes.
  By perturbing $J$ if necessary, we obtain a subvariety $Z$ of $X$ that
  consists of $\deg(X)$ reduced points none of which are real.  Every element of
  $R_2$ vanishing at all the points in $Z$ lies in $\Ker \bigl( \sigma^*(\ell)
  \bigr)$, so it follows that $\ell$ can be expressed as a linear combination of
  the evaluations at points in $Z$.  As in proof of Corollary~4.3 in
  \cite{Blekherman}, we deduce that the set $Z$ contains at most one pair of
  complex zeroes.  Because $\deg(X) \geq 3$, we conclude the set $Z$ must
  contain at least one real zero which produces the required contradiction.
\end{proof}

\section{Equality of the Fundamental Cones}
\label{sec:equality}

\noindent
This section focuses on sufficient conditions for the equality of the
sums-of-squares cone $\Sos_X$ and the nonnegative cone $\Pos_X$.  We complete
the proof of our main theorem, by showing that $\Sos_X$ equals $\Pos_X$ whenever
the quadratic deficiency vanishes.  Combining our main theorem with the
celebrated classification for varieties of minimal degree (e.g.\ Theorem~1 in
\cite{EisenbudHarris}), we describe in detail the varieties for which equality
holds.  Using the Veronese map, we also generalize the main theorem to
nonnegative forms of higher degree.

Our first proposition provides the second implication needed for the proof of
Theorem~\ref{thm:main}.

\begin{proposition}
  \label{pro:if}
  If $\varepsilon(X) = 0$, then we have $\Sos_X = \Pos_X$.
\end{proposition}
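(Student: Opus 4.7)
The plan is to establish the dual equality $\Sos^*_X = \Pos^*_X$, which, since both cones are closed, is equivalent to $\Sos_X = \Pos_X$ by biduality. The inclusion $\Pos^*_X \subseteq \Sos^*_X$ always holds (since $\Sos_X \subseteq \Pos_X$), so the work is to prove the reverse inclusion. Because $\Sos_X$ is full-dimensional by Lemma~\ref{lem:cones}, the cone $\Sos^*_X$ is pointed and closed in the finite-dimensional space $R_2^*$, hence by the Minkowski theorem it is the conical hull of its extremal rays. It therefore suffices to show that every extremal ray of $\Sos^*_X$ is contained in $\Pos^*_X$.

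Let $\ell \in \Sos^*_X$ generate an extremal ray. Lemma~\ref{lem:hsop} offers a dichotomy: either $\ell$ is (the square of) a point evaluation at some $p \in X(\RR)$, in which case $\ell \in \Pos^*_X$ by Lemma~\ref{lem:cones} and we are done, or else $\Ker\bigl(\sigma^*(\ell)\bigr) \subseteq R_1$ contains a homogeneous system of parameters $h_0, h_1, \dotsc, h_m$ on $R$. The heart of the proof is to rule out this second alternative when $\varepsilon(X)=0$.

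To rule it out, I would invoke the classical fact that a variety of minimal degree is arithmetically Cohen--Macaulay (e.g.\ Theorem~1 in \cite{EisenbudHarris}), which is available because $\varepsilon(X) = 0$ forces $\deg(X) = 1 + \codim(X)$ by Lemma~\ref{lem:deficiency}. Combined with the nonnegativity of the $h^*$-vector of a Cohen--Macaulay standard graded algebra and the identity $\sum_{j} h^*_j(X) = \deg(X) = 1+e$, this yields $h^*_j(X) = 0$ for all $j \geq 2$; thus the Hilbert series of $R$ is exactly $(1+et)/(1-t)^{m+1}$. Since $R$ is Cohen--Macaulay, the sequence $h_0, h_1, \dotsc, h_m$ is regular, so the Artinian quotient $R' := R/(h_0, h_1, \dotsc, h_m)$ has Hilbert function $(1, e, 0, 0, \dotsc)$.

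In particular $R'_2 = 0$, which forces $R_2 = h_0 R_1 + h_1 R_1 + \dotsb + h_m R_1$. Writing any $f \in R_2$ as $f = \sum_i h_i g_i$ with $g_i \in R_1$, we compute $\ell(f) = \sum_i \ell(h_i g_i) = 0$, since $h_i \in \Ker\bigl(\sigma^*(\ell)\bigr)$ means precisely that $\ell(h_i g) = 0$ for every $g \in R_1$. Thus $\ell = 0$, contradicting that $\ell$ generates a ray, and the second alternative of Lemma~\ref{lem:hsop} cannot occur. The only nontrivial external input is the arithmetic Cohen--Macaulay property of varieties of minimal degree; once that is in hand, the Hilbert series computation and the reduction to $R'_2=0$ are routine bookkeeping.
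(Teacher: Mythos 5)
Your proof is correct and follows essentially the same route as the paper: reduce to extremal rays of $\Sos_X^*$, apply Lemma~\ref{lem:hsop}, and kill the homogeneous-system-of-parameters alternative using the arithmetic Cohen--Macaulayness of varieties of minimal degree. The only (immaterial) difference is that you compute $\dim(R/(h_0,\dotsc,h_m))_2 = 0$ via the Hilbert series of the Artinian reduction, whereas the paper reads it off from the degree-two strand of the Koszul complex; both reduce to $\hh^*_2(X) = \varepsilon(X) = 0$ from Lemma~\ref{lem:deficiency}.
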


\begin{proof}
  It suffices to prove that $\Pos_X^* = \Sos_X^*$.  Given the descriptions for
  $\Pos_X^*$ and $\Sos_X^*$ in Lemma~\ref{lem:cones}, this reduces to showing
  that every extremal ray of $\Sos_X^*$ is generated by evaluation at some point
  $p \in X(\RR)$.  Suppose otherwise and consider an $\ell \in \Sos_X^*$ that
  generates an extremal ray but is not determined by evaluation at a point $p
  \in X(\RR)$.  Lemma~\ref{lem:hsop} establishes that there exists a homogeneous
  system of parameters $g_0, g_1, \dotsc, g_m \in \Ker \bigl( \sigma^*(\ell)
  \bigr)$.  Since $\varepsilon(X) = 0$, Lemma~\ref{lem:deficiency} establishes
  that $X$ is a variety of minimal degree; varieties of minimal degree are
  arithmetically Cohen-Macaulay (e.g.\ see \S4 in \cite{EG}), so $g_0, g_1,
  \dotsc, g_m$ are also a regular sequence.  Let $J$ denote the homogeneous
  ideal in $R$ generated $g_0, g_1, \dotsc, g_m$.  Since we have $\ell(f g_j) =
  0$ for all $f \in R_1$ and all $0 \leq j \leq m$, the linear functional $\ell
  \in R_2^*$ annihilates the subspace $J_2$.  By taking the degree-two graded
  components of the associated Koszul complex and using
  Lemma~\ref{lem:deficiency}, we obtain
  \begin{align*}
    \dim \bigl( \tfrac{R}{J} \bigr)_{\! 2} &= \dim (R_2) - (m+1) \dim (R_1) +
    \tbinom{m+1}{2} \dim (R_0) = \varepsilon(X) = 0 \, ,
  \end{align*}
  whence $R_2 = J_2$.  However, this yields a contradiction because the linear
  functional $\ell \in R_2^*$ is nonzero and does not annihilate all of $R_2$.
  Therefore, every extremal ray of $\Sos_X^*$ is generated by evaluation at some
  point $p \in X(\RR)$ as required.
\end{proof}

\begin{remark}
  In the proof of Proposition~\ref{pro:if}, the hypothesis that $X$ is
  totally-real is not required to establish that $\Pos_X^* = \Sos_X^*$.
\end{remark}

\begin{proof}[Proof of Theorem~\ref{thm:main}]
  If $X$ is not a variety of minimal degree, then we have $\varepsilon(X) > 0$
  and Proposition~\ref{pro:onlyif} establishes that $\Sos_X$ is a proper subset
  of $\Pos_X$.  Conversely, if $X$ is a variety of minimal degree then
  Lemma~\ref{lem:deficiency} establishes that $\varepsilon(X) = 0$ and
  Proposition~\ref{pro:if} states that $\Sos_X = \Pos_X$.
\end{proof}

Beyond the conceptual explanation for the equality $\Pos_X = \Sos_X$,
Theorem~\ref{thm:main} allows us to explicitly exhibit all the varieties that
satisfy this condition.  The classical characterization for varieties of minimal
degree (e.g.\ Theorem~1 in \cite{EisenbudHarris}) states that a variety of
minimal degree is a cone over a smooth variety of minimal degree, and a smooth
variety of minimal degree is either a quadratic hypersurface, the Veronese
surface $\nu_2(\PP^2) \subset \PP^5$, or a rational normal scroll.  Together
with Theorem~\ref{thm:main}, this yields precisely the following three families
in which nonnegativity is equivalent to being a sum of squares.

\begin{example}
  \label{exa:hypersurface}
  Let $X \subset \PP^n$ be a cone over a totally-real irreducible quadric
  hypersurface.  In other words, $R = \RR[x_0,\dotsc, x_n]/I$ where $I$ is the
  principal ideal generated by an indefinite quadratic form.  It follows that
  $\deg(X) = 2 = 1 + \codim(X)$, so Theorem~\ref{thm:main} implies that every
  nonnegative element of $R_2$ is a sum of squares.
\end{example}

\begin{example}
  \label{exa:VeroneseSurface}
  For $n \geq 5$, let $X \subset \PP^n$ be the cone over the Veronese surface
  $\nu_2(\PP^2) \subset \PP^5$.  Given suitable coordinates $x_0, \dotsc, x_n$
  on $\PP^n$, the homogeneous ideal $I$ for $X$ is defined by the $(2 \times
  2)$-minors of the generic symmetric matrix:
  \[
  \begin{bmatrix}
    x_0 & x_1 & x_2 \\
    x_1 & x_3 & x_4 \\
    x_2 & x_4 & x_5 
  \end{bmatrix} \, . 
  \]
  In this case, we have $\deg(X) = 4 = 1 + \codim(X)$, so Theorem~\ref{thm:main}
  implies that every nonnegative element of $R_2 = (\RR[x_0, \dotsc, x_n]/I)_2$
  is a sum of squares. 
\end{example}

\begin{example}
  \label{exa:scroll}
  For $k \geq 0$ and $d_k \geq d_{k-1} \geq \dotsb \geq d_0 \geq 0$ with $d_k >
  0$, set $n := k + d_0 + d_1 + \dotsb + d_k$ and let $X \subset \PP^n$ be the
  associated rational normal scroll; $X$ is the image of the projectivized
  vector bundle $\mathcal{O}_{\PP^1}(d_0) \oplus \mathcal{O}_{\PP^1}(d_1) \oplus
  \dotsb \oplus \mathcal{O}_{\PP^1}(d_k)$ under the complete linear series of
  the tautological line bundle.  In particular, $X$ is the rational normal curve
  of degree $n$ in $\PP^n$ when $k = 0$, and $X$ is $\PP^n$ when $d_{k-1} = 0$
  and $d_k = 1$.  Given suitable coordinates $x_{0,0}, \dotsc, x_{0,d_0},
  x_{1,0}, \dotsc, x_{1,d_1}, \dotsc, x_{k,0}, \dotsc, x_{k,d_k}$ on $\PP^n$,
  the homogeneous ideal $I$ for $X$ is defined by the $(2 \times 2)$-minors of
  the block Hankel matrix:
  \[
  \begin{bmatrix}
    x_{0,0} & \dotsb & x_{0,d_0-1} & x_{1,0} & \dotsb & x_{1,d_1-1} & \dotsb &
    x_{k,0} & \dotsb & x_{k,d_k-1} \\
    x_{0,1} & \dotsb & x_{0,d_0} & x_{1,1} & \dotsb & x_{1,d_1} & \dotsb &
    x_{k,1} & \dotsb & x_{k,d_k}
  \end{bmatrix} \, . 
  \]
  Since we have $\deg(X) = d_0 + d_1 + \dotsb + d_k = n - k = 1 + \codim(X)$,
  Theorem~\ref{thm:main} implies that every nonnegative element of $R_2 =
  (\RR[x^{}_{0,0}, \dotsc, x^{}_{k,d_k}]/I)_2$ is a sum of squares.
\end{example}

The following remark explains why it is sufficient to consider quadratic forms.

\begin{remark}
  \label{rem:higherdegree}
  The union of Theorem~\ref{thm:main} with the classification for varieties of
  minimal degree also allows us to identify when every nonnegative form on $X$
  of degree $2d$ for $d > 1$ is a sum of squares.  Geometrically, this is
  equivalent to recognizing when the $d$-th Veronese embedding of $X \subseteq
  \PP^n$ is a variety of minimal degree.  The degree of every curve on the image
  $\nu_d(X)$ is a multiple of $d$, so $\nu_d(X)$ does not contain any lines.
  Assume that $\nu_d(X)$ is a variety of minimal degree.  It cannot be a cone
  over a smooth variety of minimal degree or a rational normal scroll with $k >
  0$ because these varieties contain lines.  It follows that $\nu_d(X)$ is
  either a rational normal curve or the Veronese surface $\nu_2(\PP^2) \subset
  \PP^5$.  Therefore, every nonnegative form on $X$ of degree $2d$ for $d > 1$
  is a sum of squares if and only if $X \cong \PP^1$ or $X = \PP^2$ and $d =
  2$.  

  As an example, the rational quartic curve in $C \subset \PP^3$ defined by
  $[y_0 : y_1] \mapsto [y_0^4 : y_0^3 y_1 : y_0 y_1^3 : y_1^4]$ is not a variety
  of minimal degree.  However, its image under the second Veronese map $\nu_2(C)
  \subset \PP^8$ is the rational normal curve of degree eight which is a variety
  of minimal degree.  Hence, every nonnegative quartic form on $C$ is a sum of
  squares.
\end{remark}

We conclude this section by viewing our main theorem through the lens of measure
theory.

\begin{remark}
  \label{rem:truncated}
  Fix a positive integer $d$ and let $X$ be a real projective variety with
  homogeneous coordinate ring $R$.  Let $W := S^n \cap \tilde{X}$ be the
  intersection of the affine cone $\tilde{X} \subseteq \AA^{n+1}(\RR)$ of $X$
  with the unit sphere $S^n$.  A measure on $X(\RR)$ corresponds to a measure on
  $W$ which is invariant under the antipodal map.  Any such measure $\mu$
  defines a linear functional $\ell \in R_{2d}^*$ by sending $f \in R_{2d}$ to
  $\int_W f \; d\mu$.  The \define{truncated moment problem} asks for a
  characterization of the $\ell \in R^*_{2d}$ that come from integration with
  respect to a measure on $X$; see Definition~3.1 in \cite{Las}.  Such
  functionals are nonnegative and belong to $\Pos^*_{\nu_d(X)}$.  Moreover,
  every element of $\Pos^*_{\nu_d(X)}$ has this form.  As a result, the
  truncated moment problem on $X$ can be reinterpreted as asking for a
  characterization of the cone $\Pos^*_{\nu_d(X)}$.  If $B_{\ell}$ is the moment
  matrix of $\ell$ (i.e.\ the matrix associated to the quadratic form of $\ell$
  with respect to a monomial basis for $R_d$) then it is a necessary that
  $B_\ell$ be positive semidefinite or equivalently $\ell \in
  \Sos^*_{\nu_d(X)}$.  From this viewpoint, Theorem~\ref{thm:main} classifies
  the varieties $X$ for which the truncated moment problem in degree two is
  equivalent to deciding positive semidefiniteness of the moment matrix.

\end{remark}

\section{The Intrinsic Perspective}
\label{sec:intrinsic}

\noindent
In this section, we shift our perspective from an embedded variety to linear
series on an abstract variety.  This approach gives us greater flexibility which
will be used in applications.  For example, by working with positively
multigraded polynomial rings, we list the cases in which every nonnegative
multihomogeneous polynomial is a sum of squares.

Let $Y$ be an $m$-dimensional totally-real projective variety; it is a
geometrically integral projective scheme over $\Spec(\RR)$ such that the set
$Y(\RR)$ of real points is Zariski dense.  Consider a Cartier divisor $D$ on $Y$
that is locally defined by rational functions with real coefficients, and fix a
nondegenerate basepoint-free linear series $V \subseteq H^0\bigl(Y,
\mathcal{O}_Y(D) \bigr)$.  Since $D$ is defined over $\RR$, we may regard $V$ as
a real vector space.  Let $\sigma \colon \Sym^2 \bigl( H^0 \bigl( Y,
\mathcal{O}_Y(D) \bigr) \bigr) \to H^0 \bigl( Y, \mathcal{O}_Y(2D) \bigr)$
denote the canonical multiplication map and let $2V := \sigma \bigl( \Sym^2(V)
\bigr) \subseteq H^0 \bigl( Y, \mathcal{O}_Y(2D) \bigr)$.  Given a real point $p
\in Y(\RR)$ and a section $s \in H^0 \bigl( Y, \mathcal{O}_Y(2D) \bigr)$, the
\define{sign of $s$ at $p$} is $\sign_{p}(s) := \sign ( \lambda ) \in
\{-1,0,1\}$ where $U \subseteq Y$ is a neighbourhood of the point $p \in Y$ over
which the line bundle $\mathcal{O}_Y(D)$ is trivial, $\varsigma \in H^0 \bigl(
U, \mathcal{O}_{Y}(D) \bigr)$ is a generator of $\mathcal{O}_Y(D)|_{U}$, and
$\lambda \in H^{0}(U,\mathcal{O}_Y)$ is defined by $s|_{U} = \lambda
\varsigma^2$.  The sign of $s$ at $p$ is independent of the choice of $U$ and
$\varsigma$; see \S2.4 in \cite{Scheiderer}.  The section $s$ is
\define{nonnegative} if $\sign_{p}(s) \geq 0$ for all $p \in Y(\RR)$ and we
simply write $s(p) \geq 0$.

The central objects of study, in this intrinsic setting, become
\begin{alignat*}{2}
  \Pos_{Y,V} &:= \{ s \in 2V : \text{$s(p) \geq 0$ for all $p \in Y(\RR)$} \} \,
  , & &\text{and}\\
  \Sigma_{Y,V} &:= \{ s \in 2V : \text{there exist $t_1, t_2, \dotsc, t_k \in V$
    such that $s = \sigma(t_1^2) + \sigma(t_2^2) + \dotsb + \sigma(t_k^2)$ } \}
  \, . &
\end{alignat*}
We again have $\Sos_{Y,V} \subseteq \Pos_{Y,V}$.  To describe the properties of
these subsets, let $n$ be the projective dimension of $|V|$, let $\varphi \colon
Y \to \PP^n$ be the associated morphism, and let $X := \varphi(Y)$.  The
linear series $|V|$ is nondegenerate if and only if $X \subseteq \PP^n$ is
nondegenerate.  The kernel of the composition of the canonical homomorphisms of
graded rings $\RR[x_0, \dotsc, x_n] \cong \Sym(V) \to \Sym \bigl( H^0(Y,
\mathcal{O}_Y(D) \bigr)$ and $\Sym \bigl( H^0(Y, \mathcal{O}_Y(D) \bigr) \to
\bigoplus\nolimits_{j \in \NN} H^0 \big( Y, \mathcal{O}_Y(jD) \bigr)$ is the
unique saturated ideal $I$ vanishing on $X$.  It follows that the homogeneous
coordinate ring of $X$ is $R = \Sym(V)/I$, and the induced inclusion of graded
rings is $\varphi^{\sharp} \colon R \to \bigoplus\nolimits_{j \in \NN} H^0
\big( Y, \mathcal{O}_Y(jD) \bigr)$.

The next proposition shows that these collections of $\Pos_{Y,V}$ and
$\Sos_{Y,V}$ are closely related to the cones $\Pos_X$ and $\Sos_X$, and
provides an alternative version of Theorem~\ref{thm:main}.

\begin{theorem}
  \label{thm:linearseries}
  We have $\varphi^{\sharp}( \Sos_X) = \Sos_{Y,V}$.  If $\varphi
  \bigl(Y(\RR)\bigr)$ is dense in the strong topology on $X(\RR)$, then we also
  have $\varphi^{\sharp}(\Pos_X) = \Pos_{Y,V}$, and $\Pos_{Y,V} = \Sos_{Y,V}$ if
  and only if $X$ is a variety of minimal degree.
\end{theorem}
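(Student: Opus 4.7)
The plan is to reduce Theorem~\ref{thm:linearseries} to Theorem~\ref{thm:main} applied to the embedded variety $X = \varphi(Y) \subseteq \PP^n$. The bridge is the graded ring inclusion $\varphi^{\sharp}$, which in degree two gives an $\RR$-linear isomorphism $R_2 \xrightarrow{\sim} 2V$: by the description of $I$ recalled in the excerpt, the composition $\Sym^2(V) \to H^0 \bigl( Y, \mathcal{O}_Y(2D) \bigr)$ is surjective onto $2V$ with kernel $I_2$, so it descends to an isomorphism on $R_2$. Since $\varphi^{\sharp}$ sends the square of a linear form in $R_1$ to $\sigma(t^2)$ for the corresponding $t \in V$, and is $\RR$-linear, it carries sums of squares to sums of squares and back, yielding $\varphi^{\sharp}(\Sos_X) = \Sos_{Y,V}$ without any density hypothesis.

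For $\varphi^{\sharp}(\Pos_X) = \Pos_{Y,V}$, I would first establish the pointwise sign identity $\sign_p \bigl( \varphi^{\sharp}(f) \bigr) = \sign_{\varphi(p)}(f)$ for every $p \in Y(\RR)$ and $f \in R_2$. This is obtained by unwinding the local-trivialization definition of $\sign_p$: choosing a basis $t_0, \dotsc, t_n \in V$ and a local generator $\varsigma$ of $\mathcal{O}_Y(D)$ near $p$, one writes $t_i = f_i \varsigma$, so $\varphi(p)$ has affine representative $\tilde q = \bigl( f_0(p), \dotsc, f_n(p) \bigr)$ and $\varphi^{\sharp}(f)|_U = \tilde f(f_0, \dotsc, f_n) \varsigma^2$, whence the two signs coincide. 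The inclusion $\varphi^{\sharp}(\Pos_X) \subseteq \Pos_{Y,V}$ is immediate. For the reverse inclusion, write $s = \varphi^{\sharp}(f)$; nonnegativity of $s$ forces $f \geq 0$ on $\varphi(Y(\RR))$. Now $f$ lifts to a continuous function on the compact set $S^n \cap \tilde X$, and strong density of $\varphi(Y(\RR))$ in $X(\RR)$ together with continuity extends the inequality to all of $X(\RR)$, so $f \in \Pos_X$.

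Finally, I would invoke Theorem~\ref{thm:main} applied to $X$. Its hypotheses hold: $X$ is nondegenerate because $|V|$ is, irreducible because $Y$ is, and $X(\RR)$ is Zariski dense in $X$ because strong density of $\varphi(Y(\RR))$ in $X(\RR)$ already implies Zariski density in $X$. Theorem~\ref{thm:main} then gives $\Pos_X = \Sos_X$ if and only if $X$ is of minimal degree, and transporting this equivalence through the $\RR$-linear bijection $\varphi^{\sharp} \colon R_2 \to 2V$, which by the previous two paragraphs identifies the fundamental cones pairwise, establishes the desired equivalence $\Pos_{Y,V} = \Sos_{Y,V}$ if and only if $X$ is of minimal degree. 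The step I expect to require the most care is the sign compatibility in the second paragraph, where one must match the ad hoc sign conventions on $R_2$ and on $H^0 \bigl( Y, \mathcal{O}_Y(2D) \bigr)$ and verify that rescaling the local generator $\varsigma$ rescales the affine representative consistently; once this is in hand, the remaining deductions are formal.
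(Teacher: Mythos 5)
Your proposal is correct and follows essentially the same route as the paper: identify $\Sos_X$ with $\Sos_{Y,V}$ and $\Pos_X$ with $\Pos_{Y,V}$ via the degree-two component of $\varphi^{\sharp}$ (the latter using that $\varphi$ sends real points to real points for one inclusion and strong density for the other), then quote Theorem~\ref{thm:main} for $X$; the paper's own proof is just a terser version of this. The only nitpick is your justification of the Zariski density of $X(\RR)$: it follows from the standing hypothesis that $Y(\RR)$ is Zariski dense in $Y$ (so its image under the dominant map $\varphi$ is Zariski dense in $X$), not from the strong-density assumption on $\varphi\bigl(Y(\RR)\bigr)$ in $X(\RR)$.
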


\begin{proof} 
  By construction, we have $\varphi^{\sharp}(R_1) = V$ and
  $\varphi^{\sharp}(R_2) = 2V$, which establishes the first assertion.  Since
  $\varphi$ sends a real point to a real point, we have $\Pos_{Y,V} \subseteq
  \varphi^{\sharp}(\Pos_X)$.  Conversely, each real point in $X$ lies in the
  closure of the image of a real point in $Y$ by assumption, so we have
  $\varphi^{\sharp}(\Pos_X) \subseteq \Pos_{Y,V}$.  Combining the first two
  parts with Theorem~\ref{thm:main} yields the third part.
\end{proof}

\begin{remark}
  \label{rem:odd}
  When the map $\varphi$ has finite fibers of odd length, the condition on
  $\varphi$ in Theorem~\ref{thm:linearseries} is automatically satisfied.  In
  particular, the hypothesis holds when $\varphi$ is an embedding.  Indeed,
  complex conjugation fixes the fiber over a real point.  Since the fibers have
  odd length, conjugation must fix at least one point in each fiber over a real
  point, so $\varphi$ maps $Y(\RR)$ surjectively onto $X(\RR)$.
\end{remark}

Without placing some restrictions on the map $\varphi$, the theorem is false.

\begin{example}
  Consider the linear series $V = \langle x_0^2, x_1^2, \dotsc, x_n^2 \rangle
  \subseteq H^0 \bigl( \PP^n, \mathcal{O}_{\PP^n}(2) \bigr)$.  The corresponding
  morphism $\varphi \colon \PP^n \to \PP^n$ is not surjective on real points.
  In this case, $(\varphi^{\sharp})^{-1}(\Pos_{Y,V})$ consists of all quadratic
  forms that are nonnegative on the closed nonnegative orthant in $\RR^{n+1}$
  (i.e.\ the copositive forms) and this collection is strictly larger than the
  cone of all nonnegative quadratic forms; see \S3.6.1 in \cite{BPT}.
\end{example}

The following explains why we can restrict to linear series for which $2V =
H^0\bigl( Y, \mathcal{O}_Y(2D) \bigr)$.

\begin{observation}
  \label{obs:2normal}
  If $2V \neq H^0 \bigl( Y, \mathcal{O}_Y(2D) \bigr)$, then we claim that there
  is a nonnegative section in $H^0 \bigl( Y, \mathcal{O}_Y(2D) \bigr)$ that is
  not a sum of squares.  Since the linear series $V$ is basepoint-free, there
  exists $t_0, t_1, \dotsc, t_n \in V$ with no common zeroes, so $\sigma(t_0^2)
  + \sigma(t_1^2) + \dotsb + \sigma(t_n^2) \in H^0 \bigl(Y, \mathcal{O}_Y(2D)
  \bigr)$ is strictly positive on $Y(\RR)$.  Our assumption on $2V$ implies that
  there is a section $s \in H^0 \bigl( Y, \mathcal{O}_{Y}(2D) \bigr) \setminus
  2V$.  It follows that the section $\sigma(t_0^2) + \sigma(t_1^2) + \dotsb +
  \sigma(t_n^2) - \delta s \in H^0 \bigl( Y, \mathcal{O}_{Y}(2D) \bigr)$ cannot
  be a sum of squares for all $\delta \in \RR$.  On the other hand, this section
  is nonnegative for all sufficiently small $\delta > 0$, because $Y(\RR)$ is
  compact set and, for any section $s$, we have
  \[
  \overline{\{ p \in X(\RR) : \sign_p(s) < 0 \}} \subseteq \{ p \in X(\RR) : \sign_p(s)
  \leq 0 \} \, . 
  \]
\end{observation}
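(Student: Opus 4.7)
The plan is to perturb a strictly positive sum of squares by a small multiple of any section that lies outside $2V$, and then to observe that the perturbation leaves the cone of sums of squares while still belonging to $\Pos_{Y,V}$.

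First I would produce a strictly positive element of $2V$. Since the linear series $V \subseteq H^0\bigl(Y, \mathcal{O}_Y(D)\bigr)$ is basepoint-free, I can choose $t_0, t_1, \dotsc, t_n \in V$ with no common zeroes on $Y$; setting $s_+ := \sigma(t_0^2) + \sigma(t_1^2) + \dotsb + \sigma(t_n^2) \in 2V$ then gives a section with $\sign_p(s_+) > 0$ for every $p \in Y(\RR)$. Next, invoking the hypothesis $2V \neq H^0\bigl(Y, \mathcal{O}_Y(2D)\bigr)$, I would fix a section $s \in H^0\bigl(Y, \mathcal{O}_Y(2D)\bigr) \setminus 2V$ and, for $\delta \in \RR$, consider $s_\delta := s_+ - \delta s$.

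The two claims to prove are: (a) for every $\delta \neq 0$, the section $s_\delta$ is not a sum of squares; and (b) for all sufficiently small $\delta > 0$, the section $s_\delta$ is nonnegative on $Y(\RR)$. Claim (a) is immediate from the definition: every sum of squares lies in $\sigma\bigl(\Sym^2(V)\bigr) = 2V$, so if $s_\delta \in 2V$ then $\delta s = s_+ - s_\delta \in 2V$, forcing $s \in 2V$, a contradiction. Claim (b) is a compactness argument. Cover $Y(\RR)$ by finitely many neighbourhoods on each of which $\mathcal{O}_Y(D)$ is trivial with generator $\varsigma$; writing $s_+|_U = \lambda_+ \varsigma^2$ and $s|_U = \mu \varsigma^2$ with $\lambda_+, \mu$ continuous real-valued and $\lambda_+$ strictly positive on $U \cap Y(\RR)$, the ratio $\mu/\lambda_+$ is bounded on the relevant compact subsets, and a uniform choice of $\delta$ extracted from the finite cover ensures $\lambda_+ - \delta \mu > 0$ throughout. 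Equivalently, the closure inclusion $\overline{\{p \in Y(\RR) : \sign_p(s_\delta) < 0\}} \subseteq \{p \in Y(\RR) : \sign_p(s_\delta) \leq 0\}$ highlighted in the excerpt implies that any sequence of sign changes as $\delta \to 0^+$ would produce a zero of $s_+$ in $Y(\RR)$, which does not exist.

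The main obstacle is purely conceptual: one must ensure that the continuity and compactness estimates, which are natural for functions, transfer cleanly to sections of the line bundle $\mathcal{O}_Y(2D)$. This is handled by the local trivializations that define $\sign_p$ in the paper. Once this transfer is justified, combining (a) and (b) produces, for any small enough $\delta > 0$, a nonnegative section $s_\delta \in H^0\bigl(Y, \mathcal{O}_Y(2D)\bigr)$ that is not a sum of squares, completing the argument.
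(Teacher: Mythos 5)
Your proposal is correct and follows essentially the same route as the paper: perturb the strictly positive sum of squares $\sigma(t_0^2)+\dotsb+\sigma(t_n^2)$ by a small multiple of a section outside $2V$, note that membership in $2V$ obstructs being a sum of squares, and use compactness of $Y(\RR)$ for nonnegativity. Your explicit treatment of the local trivializations in claim (b), and your (correct) restriction to $\delta \neq 0$ in claim (a), only make the paper's sketch more precise.
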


To illustrate the power of Theorem~\ref{thm:linearseries}, we capture all of the
previously known situations in which nonnegativity is equivalent to being a sum
of squares.

\begin{example} 
  \label{exa:Hilbert}
  For $n \geq 0$ and $d \geq 1$, consider $Y = \PP^n$ and $V = H^{0} \bigl(
  \PP^n, \mathcal{O}_{\PP^n}(d) \bigr)$.  The corresponding map $\varphi$
  is the Veronese embedding, so Theorem~\ref{thm:linearseries} implies that
  every nonnegative homogeneous polynomial of degree $2d$ is a sum of squares
  (i.e.\ $\Pos_{Y,V} = \Sos_{Y,V}$) if and only if $X = \varphi(\PP^n)$ is
  a variety of minimal degree.  Moreover, we have $\deg(X) = d^n =
  \binom{n+d}{n} -n = 1 + \codim(X)$ in only three cases:
  \begin{itemize}
  \item $n = 1$: all nonnegative binary forms are sums of squares, and
    $X$ is a rational normal curve;
  \item $d = 1$: all nonnegative quadratic forms are sums of squares, and $X =
    \PP^n$;
  \item $d = 2$ and $n = 2$: all nonnegative ternary quartics are sums of
    squares, and $X$ is the Veronese surface.
  \end{itemize}
  In particular, we recover Hilbert's famous characterization of when every
  nonnegative homogeneous polynomial is a sum of squares; see \cite{Hilbert} or
  \cite{BPT}*{\S3.1.2}.  Even better, we provide a new geometric interpretation
  for the exceptional case of ternary quartics. 
\end{example}

\begin{example} 
  \label{exa:CLR}
  For $k \geq 2$, $n_i \geq 1$, and $d_i \geq 1$ where $1 \leq i \leq k$,
  consider $Y = \PP^{n_1} \times \PP^{n_2} \times \dotsb \times \PP^{n_k}$ and
  the linear series $V = H^{0} \bigl( Y, \mathcal{O}_{\PP^{n_1}}(d_1) \boxtimes
  \mathcal{O}_{\PP^{n_2}}(d_2) \boxtimes \dotsb \boxtimes
  \mathcal{O}_{\PP^{n_k}}(d_k) \bigr)$.  The corresponding map $\varphi$ is the
  Segre-Veronese embedding, so Theorem~\ref{thm:linearseries} implies that every
  nonnegative multihomogeneous polynomial of degree $(2d_1, \dotsc, 2d_k)$ is a
  sum of squares (i.e.\ $\Pos_{Y,V} = \Sos_{Y,V}$) if and only if $X =
  \varphi(Y)$ is a variety of minimal degree.  Moreover, we have
  \begin{align*}
    \deg(X) &= d_1^{n_1} d_2^{n_2} \dotsb d_k^{n_k} \tbinom{n_1 + n_2 + \dotsb +
      n_k}{n_1, n_2, \dotsc, n_k} = d_1^{n_1} d_2^{n_2} \dotsb d_k^{n_k}
    \tfrac{(n_1 + n_2 + \dotsb +
      n_k)!}{n_1! n_2! \dotsb n_k!} \\
    &= \tbinom{n_1+d_1}{n_1}\tbinom{n_2+d_2}{n_2} \dotsb \tbinom{n_k+d_k}{n_k} -
    n_1^{} - n_2^{} - \dotsb - n_k^{} = 1 + \codim(X)
  \end{align*}
  in precisely two cases:
  \begin{itemize}
  \item $k = 2$, $n_1 = 1$, and $d_2 = 1$,
  \item $k = 2$, $n_2 = 1$, and $d_1 = 1$.
  \end{itemize}
  By symmetry, both cases assert that all nonnegative biforms that are quadratic
  in one set of variables and binary in the other set of variables are sums of
  squares, and $X$ is a rational normal scroll associated to a vector bundle of
  the form $\bigoplus_{j} \mathcal{O}_{\PP^1}(1)$.  In other words, we recover
  and provide a new geometric interpretation for Theorem~8.4 in \cite{CLR}.
\end{example}

Since two of the three families of varieties of minimal degree are toric
varieties, the intrinsic descriptions can be expressed in terms of a polynomial
ring with an appropriate grading.

\begin{example}
  \label{exa:VeroneseCone}
  For $n \geq 5$, consider the cone $Y \subset \PP^{n}$ over the Veronese
  surface $\PP^2 \hookrightarrow \PP^5$ and the complete linear series $V =
  H^{0} \bigl( Y, \mathcal{O}_Y(H) \bigr)$ where $H$ is a hyperplane divisor
  (cf.\ Example~\ref{exa:VeroneseSurface}).  Hence, $Y$ is a simplicial normal
  toric variety with class group $\ZZ^1$ and the Cox homogeneous coordinate ring
  is $S := \RR[y_0, \dotsc, y_{n-3}]$ where $\deg(y_i) = 1$ for $0 \leq i \leq
  2$ and $\deg(y_j) = 2$ for $3 \leq j \leq n-3$.  Since $\operatorname{Pic}(X)$
  has index two within the class group, it follows that $V = H^{0} \bigl( Y,
  \mathcal{O}_Y(H) \bigr) \cong S_2$.  The image of $Y$ is a variety of minimal
  degree, so Theorem~\ref{thm:linearseries} implies that every nonnegative
  element in $S_4$ is a sum of squares.  An element of $S_4$ is a linear
  combination of the $15$ monomials of the form $y_0^4, y_0^3 y_1^{}, \dotsc,
  y_2^4$, the $6n-30$ monomials of the form $y_0^2 y_j^{}, y_0^{} y_1^{} y_j^{},
  \dotsc, y_2^2 y_j^{}$ where $3 \leq j \leq n-3$, and the $\binom{n-4}{2}$
  monomials of the form $y_3^2, y_3^2 y_4^{}, \dotsc, y_{n-3}^2$; the vector
  space $S_4$ has dimension $\tfrac{1}{2}n^2 +\tfrac{3}{2}n -5$.  Contrary to
  the sentence preceding Theorem~8.4 in \cite{CLR}, this inserts the exceptional
  case of ternary quartics from Example~\ref{exa:Hilbert} into an infinite
  family.
\end{example}

\begin{example}
  \label{exa:smoothScroll}
  For integers $k > 0$ and $d_k \geq d_{k-1} \geq \dotsb \geq d_0 > 0$, consider
  the projectivized vector bundle $Y = \PP \bigl( \mathcal{O}_{\PP^1}(d_0)
  \oplus \mathcal{O}_{\PP^1}(d_1) \oplus \dotsb \oplus \mathcal{O}_{\PP^1}(d_k)
  \bigr)$ and the complete linear series $V = H^{0}\bigl( Y, \mathcal{O}_Y(1)
  \bigr)$ (cf.\ Example~\ref{exa:scroll}).  Hence, $Y$ is a $(k+1)$-dimensional
  smooth toric variety with class group $\ZZ^2 = \operatorname{Pic}(X)$; see
  pages~6--7 in \cite{EisenbudHarris}.  By choosing a suitable basis for the
  class group, the Cox homogeneous coordinate ring is $S := \RR[y_0, \dotsc,
  y_{k+2}]$ where the degree of $y_j$ in $\ZZ^2$ is given by the $j$-th column
  of the matrix
  \[
  \left[ \begin{matrix}
      1 & 1 & 0 &  d_0-d_1 & d_0 - d_2 & \dotsb & d_0 - d_k  \\
      0 & 0 & 1 & 1 & 1 & \dotsb  & 1
    \end{matrix} \right] \, .
  \]  
  It follows that $V = H^{0}\bigl( Y, \mathcal{O}_Y(1) \bigr) \cong S_{(1,1)}$.
  Since the image of $Y$ is a variety of minimal degree,
  Theorem~\ref{thm:linearseries} implies that every nonnegative element in
  $S_{(2,2)}$ is a sum of squares.  An element of $S_{(2,2)}$ is a linear
  combination of monomials that are quadratic in the variables $y_2, y_3,
  \dotsc, y_{k+2}$; the vector space $S_{(2,2)}$ has dimension $(3-2d_0)
  \binom{k+2}{2} + (k+2)(d_0 + d_1 + \dotsb + d_k)$.  The special case $d_0 =
  d_1 = \dotsb = d_k = 1$ retrieves Example~\ref{exa:CLR}.
\end{example}

\begin{remark}
  \label{rem:nonsmooth}
  Example~\ref{exa:smoothScroll} excludes two types of rational normal scrolls:
  a cone over a rational normal curve (i.e.\ $k = 0$ or $d_{k-1} = 0$) which has
  class group isomorphic to $\ZZ^1$ and a cone over a smooth rational normal
  scroll (i.e.\ $d_{k-1} \neq 0$ and $d_0 = 0$) which has class group isomorphic
  to $\ZZ^2$.  The minor modifications to Example~\ref{exa:smoothScroll}
  required for both types are left to the interested reader.
\end{remark}

\begin{remark}
  \label{rem:matrix}
  The multihomogeneous forms in Example~\ref{exa:smoothScroll} also have a
  useful matrix interpretation.  By viewing $f \in S_{(2,2)}$ as a quadratic
  form in the variables $y_2, y_3, \dotsc, y_{k+2}$, we obtain a symmetric
  matrix $F$ with homogeneous entries in $\RR[y_0,y_1]$.  Lemma~3.78 in
  \cite{BPT} basically shows that $F$ is pointwise positive semidefinite if and
  only if $f$ is nonnegative and $F = G_1^{\textsf{T}} G_1 + G_2^{\textsf{T}}
  G_2 + \dotsb + G_k^{\textsf{T}} G_k$ for some matrices $G_1, G_2, \dotsc, G_k$
  with entries in $\RR[y_0,y_1]$ if and only if $f$ is a sum of squares.  Hence,
  the fact that every nonnegative element in $S_{(2,2)}$ is a sum of squares
  becomes a slight strengthening of Theorem~3.80 in \cite{BPT} in which each
  entry is homogeneous (although not necessarily of the same degree).
\end{remark}

\section{Nonnegative Sparse Polynomials}
\label{sec:sparse}

\noindent
This section examines certain sparse Laurent polynomials---those Laurent
polynomials in which the exponent vector of each monomial appearing with a
nonzero coefficient lies in a fixed lattice polytope.  We characterize the
Newton polytopes $Q$ such that every nonnegative polynomial with support
contained in $2Q$ is a sum of squares.

Let $M$ be an $m$-dimensional affine lattice, let $M_{\RR} := M \otimes_{\ZZ}
\RR$ be the associated real vector space, and let $T := \Spec(\RR[M])$ be the
corresponding split real torus.  Choosing an isomorphism $M \cong \ZZ^m$
identifies the group ring $\RR[M]$ with the Laurent polynomial ring
$\RR[z_1^{\pm 1}, \dotsc, z_m^{\pm 1}]$.  Given $f = \sum_{\mathbf{u} \in M}
c_{\mathbf{u}} \, z^{\mathbf{u}} \in \RR[M]$, its \define{Newton polytope} is
$\New(f) := \conv \{ \mathbf{u} \in M : c_{\mathbf{u}} \neq 0 \} \subset M_\RR$.
The Laurent polynomial $f$ is \define{nonnegative}, denoted by $f \geq 0$, if
the evaluation of $f$ at every point in $T(\RR)$ is nonnegative.  Fix an
$m$-dimensional lattice polytope $Q$ in $M_\RR$.  For $k \in \NN$, write $kQ$ is
the $k$\nobreakdash-fold Minkowski sum of $Q$.  The lattice polytope $Q$ is
\define{$k$-normal} if, for each $\mathbf{u} \in (kQ) \cap M$, there exist
$\mathbf{v}_1, \mathbf{v}_2, \dotsc, \mathbf{v}_k \in Q \cap M$ such that
$\mathbf{u} = \mathbf{v}_1 + \mathbf{v}_2 + \dotsb + \mathbf{v}_k$, cf.\
Definition~2.2.9 in \cite{CLS}.  Following \S3 in \cite{Stan}, the
\define{$\hh^*$-polynomial} of $Q$ is $\hh^*_0(Q) + \hh^*_1(Q) \, t + \dotsb +
\hh^*_m(Q) \, t^m := (1-t)^{m+1} \sum_{k \geq 0} \bigl| (kQ) \cap M \bigr| \,
t^k$.

The central objects of study, in this polyhedral setting, are
\begin{xalignat*}{2}
  \Pos_Q &:= \{ f \in \RR[M] : \text{$\New(f) \subseteq 2 Q$ and $f \geq 0$}
  \} & & \text{and} \\
  \Sos_Q &:= \left\{ f \in \RR[M] : 
    \begin{array}{l}
      \text{there exists $g_1, g_2, \dotsc, g_k \in
        \RR[M]$ such that $\New(g_j) \subseteq Q$} \\
      \text{for all $1 \leq j \leq k$ and $f = g_1^2 + g_2^2 + \dotsb + g_k^2$}
    \end{array}
  \right\} \, .
\end{xalignat*}
Once again, we have $\Sos_Q \subseteq \Pos_Q$.  To describe the properties of
these subsets, let $X \subseteq \PP^n$ be the embedded projective toric variety
determined by the lattice polytope $Q$.  More explicitly, the number of lattice
points in $Q$ is $n+1 = |Q \cap M|$, the polyhedral affine monoid associated to
$Q$ is $C(Q) := \NN \cdot \{ (q,1) : q \in Q \cap M \} \subset M \oplus \ZZ$,
and the toric variety is $X = \Proj \bigl( \RR[C(Q)] \bigr) \subseteq \PP^n$;
cf.\ \S2.3 in \cite{CLS}.  The lattice points in $Q$ also yield the canonical
inclusion map $\eta \colon T \to X$.

\begin{example}
  \label{exa:degree1}
  If $Q$ is an $(m-2)$-fold pyramid over the simplex $\conv\{ (0,0), (2,0),
  (0,2)\} \subset \RR^2$, then the embedded projective toric variety $X$ is the
  cone over the Veronese surface defined in Example~\ref{exa:VeroneseSurface}.
  Likewise, if $Q$ is the Cayley polytope of the line segments $[0,d_0],
  [0,d_1], \dotsc, [0,d_k]$ (see Definition~2.1 in \cite{BN}), then the embedded
  projective toric variety $X$ is the rational normal scroll defined in
  Example~\ref{exa:scroll} .
\end{example}

To establish that $2$-normality is a necessary condition for $\Pos_Q = \Sos_Q$,
we have a better version of Observation~\ref{obs:2normal} which provides an
explicit bound on the coefficient $\delta$.

\begin{lemma}
  \label{lem:2normal}
  If $Q \subset M_\RR$ is a lattice polytope that is not $2$-normal, then
  $\Sos_Q$ is a proper subset of $\Pos_Q$.
\end{lemma}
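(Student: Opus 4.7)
The plan is to reduce the claim to Observation~\ref{obs:2normal} by passing to the intrinsic setup of a linear series on a toric variety. Let $Y$ be the normal projective toric variety associated to the normal fan of $Q$, and let $D$ be the torus-invariant ample Cartier divisor for which $H^0\bigl(Y, \mathcal{O}_Y(kD)\bigr)$ has basis $\{z^{\mathbf{u}} : \mathbf{u} \in (kQ) \cap M\}$ for every $k \geq 0$. Set $V := H^0\bigl(Y, \mathcal{O}_Y(D)\bigr)$; since $D$ is ample, $V$ is basepoint-free. The image of the multiplication map is $2V = \operatorname{span}\{z^{\mathbf{v}_1+\mathbf{v}_2} : \mathbf{v}_i \in Q \cap M\}$, so the failure of $2$-normality is exactly the proper inclusion $2V \subsetneq H^0\bigl(Y, \mathcal{O}_Y(2D)\bigr)$.

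Next I would identify $\Pos_Q$ with $\Pos_{Y,V}$ and $\Sos_Q$ with $\Sos_{Y,V}$ under the natural bijection between Laurent polynomials with Newton polytope in $2Q$ and sections of $\mathcal{O}_Y(2D)$. The sum-of-squares identification is tautological. For nonnegativity, the open subset $T(\RR) \subseteq Y(\RR)$ is dense in the strong topology (checked on the toric affine cover $\Spec\bigl(\RR[\sigma^\vee \cap M]\bigr)$ of $Y$), and continuity of the sign function wherever a section is nonzero rules out negativity on $Y(\RR)$ for any section nonnegative on $T(\RR)$.

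Having set up the intrinsic picture, I would apply Observation~\ref{obs:2normal} directly: the section $h := \sum_{\mathbf{v} \in Q \cap M} z^{2\mathbf{v}}$ lies in $\Sos_{Y,V}$ and is strictly positive on $Y(\RR)$ because its vanishing at any point would contradict the basepoint-freeness of $V$; picking any $s \in H^0\bigl(Y, \mathcal{O}_Y(2D)\bigr) \setminus 2V$, the section $h + \delta s$ is then nonnegative on the compact set $Y(\RR)$ for sufficiently small $\delta > 0$ yet cannot lie in $\Sos_{Y,V}$. Translating back to Laurent polynomials produces the required element of $\Pos_Q \setminus \Sos_Q$. The main obstacle is the density claim $\overline{T(\RR)} = Y(\RR)$ in the strong topology: this is standard toric folklore but must be verified chart by chart, since without it one would have to redo the compactness step of Observation~\ref{obs:2normal} over a suitable compactification of $T(\RR)$ rather than invoking it as a black box.
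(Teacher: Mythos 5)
Your argument is correct, but it takes a genuinely different route from the paper. The paper's proof is entirely elementary and explicit: it picks a lattice point $\mathbf{u} \in (2Q) \cap M$ that is not a sum of two lattice points of $Q$, writes $(r_1 + \dotsb + r_k)\mathbf{u} = 2r_1\mathbf{v}_1 + \dotsb + 2r_k\mathbf{v}_k$ in terms of the vertices $\mathbf{v}_i$ of $Q$, and exhibits the explicit witness $f = r_1 z^{2\mathbf{v}_1} + \dotsb + r_k z^{2\mathbf{v}_k} - (r_1 + \dotsb + r_k) z^{\mathbf{u}}$, which is nonnegative by the weighted AM--GM inequality and is not a sum of squares because the monomial $z^{\mathbf{u}}$ cannot occur in any $\sum g_j^2$ with $\New(g_j) \subseteq Q$. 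Your proof instead routes through the normal toric variety $Y$ of the normal fan of $Q$ and invokes the compactness/perturbation argument of Observation~\ref{obs:2normal}; this works (the divisor $D_Q$ is indeed ample and Cartier, $V$ is basepoint-free, $2V \subsetneq H^0(Y,\mathcal{O}_Y(2D))$ is exactly non-$2$-normality, and $\Sos_{Y,V} \subseteq 2V$ rules out sums of squares), but it is heavier and loses the explicitness that is precisely the stated point of this lemma --- the paper introduces it as ``a better version of Observation~\ref{obs:2normal} which provides an explicit bound on the coefficient $\delta$,'' whereas your $\delta$ comes from a non-effective compactness argument. One further simplification you are missing: the density of $T(\RR)$ in $Y(\RR)$, which you flag as the main obstacle, is not actually needed here. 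You only need the inclusion $T(\RR) \subseteq Y(\RR)$: your section $h + \delta s$ is nonnegative on all of $Y(\RR)$, hence a fortiori on $T(\RR)$, which is all that membership in $\Pos_Q$ requires; density would only be needed for the reverse identification of $\Pos_Q$ with nonnegative sections on $Y(\RR)$, which the lemma does not use. (The density claim is nonetheless true for the normal toric variety of a fan, by one-parameter-subgroup limits on each chart, but it is a red herring in this proof.)
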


\begin{proof}
  Since $Q$ is not $2$-normal, there exists a lattice point $\mathbf{u} \in 2Q
  \cap M$ that cannot be written as a sum of lattice points in $Q \cap M$.  If
  $\mathbf{v}_1, \mathbf{v}_2, \dotsc, \mathbf{v}_k$ denote the vertices of $Q$,
  then $\mathbf{u}$ is a convex rational linear combination of $2 \mathbf{v}_1,
  2 \mathbf{v}_2, \dotsc, 2 \mathbf{v}_k$ which are the vertices of $2Q$.  By
  clearing the denominators, we obtain $(r_1 + r_2 + \dotsb + r_k) \mathbf{u} =
  2 r_1 \mathbf{v}_1 + 2 r_2 \mathbf{v}_2 + \dotsb + 2 r_k \mathbf{v}_k$ where
  $r_1, r_2, \dotsc, r_k \in \NN$ and $r_1 + r_2 + \dotsb + r_k > 0$.  Consider
  the Laurent polynomial 
  \[
  f := r_1 z^{2 \mathbf{v}_1} + r_2 z^{2 \mathbf{v}_2} + \dotsb + r_k z^{2
    \mathbf{v}_k} - (r_1 + r_2 + \dotsb + r_k) z^{\mathbf{u}} \, .
  \]
  Clearly, $\New(f) \subseteq 2Q$, and our choice of $\mathbf{u}$ guarantees
  that $f$ is not a sum of squares.  On the other hand, the inequality of
  weighted arithmetic and geometric means shows that $f$ is nonnegative.
  Therefore, we have $f \in \Pos_Q \setminus \Sos_Q$.
\end{proof}

The following result is a strengthening of Theorem~\ref{thm:linearseries} for
projective toric varieties, because the condition on real points is now both
necessary and sufficient.

\begin{theorem}
  \label{thm:sparse}
  We have $\Pos_Q = \Sigma_Q$ if and only if $\hh^*_2(Q) = 0$ and $\eta \bigl(
  T(\RR) \bigr)$ is dense in the strong topology on $X(\RR)$.
\end{theorem}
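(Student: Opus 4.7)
My approach is to translate the question into the embedded setting by invoking the projective toric variety $X \subseteq \PP^n$ associated to $Q$ and then applying Theorem~\ref{thm:main}. The variety $X$ is irreducible, nondegenerate (the $n+1$ distinct monomials $z^{\mathbf{u}}$ for $\mathbf{u} \in Q \cap M$ are linearly independent), and totally-real (the open orbit $T \subset X$ is Zariski dense in $X$ and the real locus $T(\RR)$ is Zariski dense in $T$), so Theorem~\ref{thm:main} does apply. The crucial bookkeeping is the identity
\[
\hh^*_2(Q) = \varepsilon(X) + \bigl(|2Q \cap M| - \dim R_2\bigr) \, ,
\]
in which both summands are nonnegative and the second vanishes precisely when $Q$ is $2$-normal; thus $\hh^*_2(Q) = 0$ is equivalent to the conjunction of $\varepsilon(X) = 0$ and $2$-normality of $Q$. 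Under $2$-normality, the correspondence $z^{\mathbf{u}_i} z^{\mathbf{u}_j} \leftrightarrow z^{\mathbf{u}_i + \mathbf{u}_j}$ produces a linear isomorphism between $R_2$ and the space of Laurent polynomials supported in $2Q$ which sends $\Sos_X$ onto $\Sos_Q$ and embeds $\Pos_X$ into $\Pos_Q$.

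For the backward direction, $\hh^*_2(Q) = 0$ yields $\varepsilon(X) = 0$ together with $2$-normality, so $\Sos_Q = \Sos_X$ and $\Pos_X \subseteq \Pos_Q$; the density hypothesis upgrades this inclusion to $\Pos_X = \Pos_Q$; Theorem~\ref{thm:main} supplies $\Pos_X = \Sos_X$. Chaining these equalities yields $\Pos_Q = \Sos_Q$.

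For the forward direction, assume $\Pos_Q = \Sos_Q$. Lemma~\ref{lem:2normal} immediately forces $Q$ to be $2$-normal, so the chain $\Sos_X = \Sos_Q = \Pos_Q \supseteq \Pos_X \supseteq \Sos_X$ collapses, yielding $\Pos_X = \Sos_X = \Pos_Q$. Theorem~\ref{thm:main} then gives $\varepsilon(X) = 0$, hence $\hh^*_2(Q) = 0$. The main obstacle is the density statement. The argument of Lemma~\ref{lem:cones} applied to the compact set $\overline{\eta(T(\RR))} \subseteq X(\RR)$ shows that $\Pos_Q^*$ equals the closed cone $\cone\bigl\{ (\tilde{q}^*)^2 : q \in \overline{\eta(T(\RR))} \bigr\}$. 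Suppose some $p \in X(\RR)$ lies outside $\overline{\eta(T(\RR))}$. I claim $(\tilde{p}^*)^2 \notin \Pos_Q^*$: a Carath\'eodory decomposition $(\tilde{p}^*)^2 = \sum_{i=1}^{k} \lambda_i (\tilde{q}_i^*)^2$ with $\lambda_i > 0$ and $q_i \in \overline{\eta(T(\RR))}$, viewed as positive semidefinite symmetric bilinear forms on $R_1$, would force the rank of the right-hand side, which equals the dimension of the span of $\tilde{q}_1^*, \dotsc, \tilde{q}_k^*$ in $R_1^*$, to equal the rank of the left-hand side, namely one; hence all $\tilde{q}_i^*$ are scalar multiples of a single nonzero vector, and by the injectivity of $X \hookrightarrow \PP^n$ each $q_i$ coincides projectively with $p$, contradicting $p \notin \overline{\eta(T(\RR))}$. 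Consequently some $f \in \Pos_Q$ satisfies $f(p) < 0$, so $f \in \Pos_Q \setminus \Pos_X$, contradicting $\Pos_Q = \Pos_X$ established above.
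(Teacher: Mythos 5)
Your proof is correct, but it departs from the paper's argument at the two places where the real work happens, and in both cases your route is a legitimate alternative. First, to extract $2$-normality from $\hh^*_2(Q)=0$, the paper runs an induction on $\dim Q$ using Stapledon's inequalities (to kill $\hh^*_m$ and $\hh^*_{m-1}$), Ehrhart--Macdonald reciprocity (to rule out interior lattice points of $Q$ and $2Q$), and Stanley's monotonicity for faces; your identity $\hh^*_2(Q) = \varepsilon(X) + \bigl(|2Q\cap M| - \dim R_2\bigr)$, which follows by comparing the degree-two coefficient of the Ehrhart series of $Q$ with the Hilbert function computation in Lemma~\ref{lem:deficiency}, packages the $2$-normality defect and the quadratic deficiency into one nonnegative decomposition and dispenses with the Ehrhart machinery entirely (at the cost of leaning on $\varepsilon(X)\geq 0$, which the paper proves via Castelnuovo's Lemma). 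Second, for the necessity of density, the paper exhibits an explicit witness: the pullback of $\sum_i(x_i-p_ix_0)^2-\delta x_0^2$ for $p$ a real point of $X$ at positive distance from $\overline{\eta(T(\RR))}$, which lies in $\Pos_Q$ but cannot be a sum of squares since it is negative at $p$. You instead argue dually, showing via a rank-one/Carath\'eodory argument that $(\tilde{p}^*)^2\notin\Pos_Q^*=\cone\bigl((\tilde{q}^*)^2 : q\in\overline{\eta(T(\RR))}\bigr)$ and then separating; this is slicker but requires the closedness and finite-generation facts from Lemma~\ref{lem:cones} applied to the compact set $\overline{\eta(T(\RR))}$, all of which you correctly have available. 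The remaining bookkeeping ($\Sos_Q$ as the image of $\Sos_X$, the role of Lemma~\ref{lem:2normal}, and the appeal to Theorem~\ref{thm:main}) matches the paper.
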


\begin{proof}
  We first verify that $Q$ is $2$-normal.  If $\Pos_Q = \Sigma_Q$, then
  Lemma~\ref{lem:2normal} shows that $Q$ is $2$-normal.  Assuming that
  $\hh^*_2(Q) = 0$, we confirm that $Q$ is $2$-normal by induction on the
  dimension $m$.  Since every lattice polytope of dimension at most $2$ is
  normal (i.e.\ $k$-normal for all $k$), the base case for the induction holds.
  If $m \geq 3$, then our assumption together with inequality (4) in \cite{S2}
  proves that $\hh^*_m(Q) = 0$.  Similarly, inequality (6) in \cite{S2} (with $i
  = 1$) shows that $\hh^*_{m-1}(Q) = 0$.  Hence, Ehrhart--Macdonald reciprocity
  (e.g.\ Theorem~4.4 in \cite{BR}) establishes that neither $Q$ nor $2Q$ have
  any interior lattice points.  It follows that every lattice point $\mathbf{u}
  \in (2Q) \cap M$ is contained in a face of $2Q$.  Since every facet of $2Q$
  equals $2F$ for some face $F$ of $Q$ and the monotonicity of
  $\hh^*$-polynomials (i.e.\ Theorem~3.3 in \cite{Stan}) ensures that
  $\hh^*_2(F) \leq \hh^*_2(Q) = 0$, the induction hypothesis shows that $F$ is
  $2$-normal.  In particular, we have $\mathbf{u} = \mathbf{v}_1 + \mathbf{v}_2$
  for some $\mathbf{v}_1, \mathbf{v}_2 \in F \cap M \subset Q \cap M$ and we
  conclude that $Q$ is also $2$-normal.

  The $2$-normality of $Q$ ensures that $R_2 = \RR[C(Q)]_2 \cong \RR \cdot \{
  (2Q) \cap M \}$ and, by definition, we have $\hh^*_1(Q) = n+1 = | Q \cap M | = \dim
  \RR[C(Q)]_1$, which together imply that $\Pos_Q = \Pos_X$ and $\Sos_Q =
  \Sos_X$.  Since $\hh^*_0(Q) = 1 = \dim \RR[C(Q)]_0$, Lemma~\ref{lem:deficiency}
  establishes that $\hh^*_2(Q) = \varepsilon(X)$ and we have $\hh^*_2(Q) = 0$ if and only
  if $X$ is a variety of minimal degree.  If $\eta \bigl( T(\RR) \bigr)$ is
  dense in the strong topology on $X(\RR)$, then Theorem~\ref{thm:main} proves
  that $\Pos_Q = \Sos_Q$ if and only if $\hh^*_2(Q) = \varepsilon(X) = 0$.  Thus, it
  remains to show that $\Pos_Q = \Sos_Q$ implies that $\eta \bigl( T(\RR)
  \bigr)$ is dense in the strong topology on $X(\RR)$.

  Assume $\Pos_Q = \Sos_Q$ and suppose that $\eta\bigl( T(\RR) \bigr)$ is not
  dense in the strong topology on $X(\RR)$.  By translating $Q$ in $M_{\RR}$ if
  necessary, we may assume that $Q$ contains the origin and this lattice point
  corresponds to the $0$-th coordinate of the map $\eta \colon T \to X \subseteq
  \PP^n$.  Let $U_0 \cong \AA^n$ denote the distinguished open subset of $\PP^n$
  determined by the vanishing of the $0$-th coordinate and set $W := X \cap U_0
  \subset \AA^n$.  Since $\eta \bigl( T(\RR) \bigr) \subseteq W$, our
  supposition implies that $\eta \bigl( T(\RR) \bigr)$ is not dense in the
  strong topology on $W(\RR)$.  As a consequence, there exists a point $p \in
  W(\RR)$ and a real number $\delta > 0$ such that the open ball $B_{\delta}(p)$
  of radius $\delta$ centered at $p$ is completely contained in $W(\RR)
  \setminus \overline{\eta \bigl( T(\RR) \bigr)}$.  Choose coordinates $x_0,
  x_1, \dotsc, x_n$ on $\PP^n$ with $p = [1 : p_1 : p_2 : \dotsb : p_n] \in
  \PP^n(\RR)$.  Consider the polynomial $\hat{f} := (x_1-p_1 x_0)^2 + (x_2-p_2
  x_0)^2 + \dotsb + (x_n-p_n x_0)^2 - \delta x_0^2 \in \RR[x_0, \dotsc, x_n]$
  and the corresponding Laurent polynomial $f = \eta^{\sharp}(\hat{f}) \in
  \RR[M]$ where $\eta^{\sharp} \colon \RR[x_0, \dotsc, x_n] \to \RR[M]$ is the
  canonical ring homomorphism associated to $\eta$.  By construction, we have
  $\New(f) \subseteq 2Q$ and $f$ is nonnegative on $T(\RR)$, so $f \in \Pos_Q$.
  The assumption $\Pos_Q = \Sos_Q$ guarantees that there exists $g_1, g_2,
  \dotsc, g_k \in \RR[M]$ such that $f = g_1^2 + g_2^2 + \dotsb + g_k^2$.  It
  follows that $\New(g_j) \subseteq \tfrac{1}{2} \New(f) = Q$, so there are
  linear forms $\hat{g}_j \in \RR[x_0, \dotsc, x_n]$ satisfying $g_j =
  \eta^{\sharp}(\hat{g})$ for $1 \leq j \leq k$.  Since $\eta^{\sharp}$ is
  injective, we obtain $\hat{f} = \hat{g}_1^2 + \hat{g}_2^2 + \dotsb +
  \hat{g}_k^2$.  However, this is impossible because $\hat{f}(p) = - \delta <
  0$.  Therefore, we conclude that $\eta\bigl( T(\RR) \bigr)$ is dense in the
  strong topology on $X(\RR)$.
\end{proof}

The ensuing propositions, which practically classify the lattice polytopes $Q$
with $\hh^*_2(Q) = 0$, increase the utility of Theorem~\ref{thm:sparse}.  They
also advance the general program of classifying polytopes based on their
$\hh^*$-polynomials.

\begin{proposition}
  \label{pro:h^*_2=0}
  Let $Q \subset M_{\RR}$ be an $m$-dimensional lattice polytope.  We have
  $\hh^*_2(Q) = 0$ if and only if $Q$ is $2$-normal and $Q$ is the affine
  $\ZZ$-linear image, surjective on integral points, of a polytope $Q'$ where
  $Q' \subset M'_{\RR}$ is either the $(m-2)$-fold pyramid over $\conv\{ (0,0),
  (2,0), (0,2)\} \subset \RR^2$ or the Cayley polytope of $m$ line segments.
\end{proposition}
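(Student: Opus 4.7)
The strategy is to reduce both directions to the identity $h^*_2(Q) = \varepsilon(X)$ (valid when $Q$ is $2$-normal, as already derived inside the proof of Theorem~\ref{thm:sparse}), and then to classify toric varieties of minimal degree polytope-theoretically.

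For the backward implication, suppose $Q$ is $2$-normal and $Q = \phi(Q')$ for a standard $Q'$ and an affine $\ZZ$-linear map $\phi \colon M' \to M$ surjective on integral points. Since both $Q$ and $Q'$ are $m$-dimensional in rank-$m$ lattices and $\phi(Q') = Q$, the linear part of $\phi$ has full rank, so $\phi$ is injective on $M'$; combined with surjectivity on integral points, this gives a bijection $Q' \cap M' \longleftrightarrow Q \cap M$. Next I would check that $Q'$ is $2$-normal in each case: lattice pyramids preserve normality and $\conv\{(0,0),(2,0),(0,2)\}$ is normal since it is $2$-dimensional; for a Cayley polytope of segments $[0,d_j]$, any lattice point $(i,e_j+e_k)$ of $2Q'$ with $i \leq d_j+d_k$ splits as $(\min(i,d_j),e_j) + (i-\min(i,d_j),e_k)$. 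Applying $2$-normality of both $Q$ and $Q'$ via the Minkowski sum description $(2Q) \cap M = (Q \cap M) + (Q \cap M)$, the bijection $\phi$ extends by additivity to a bijection $(2Q') \cap M' \longleftrightarrow (2Q) \cap M$, so $|(2Q') \cap M'| = |(2Q) \cap M|$. The identity $h^*_2 = |(2Q) \cap M| - (m+1)|Q \cap M| + \binom{m+1}{2}$ then gives $h^*_2(Q) = h^*_2(Q')$, and I verify $h^*_2(Q') = 0$ in each case directly: lattice pyramids preserve $h^*$-polynomials, so $h^*(Q') = h^*(\conv\{(0,0),(2,0),(0,2)\}) = 1+3t$ in the pyramid case, and by \cite{BN} the $h^*$-polynomial of any Cayley polytope of lattice segments has degree at most $1$.

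For the forward implication, $2$-normality of $Q$ under the hypothesis $h^*_2(Q) = 0$ is already obtained in the proof of Theorem~\ref{thm:sparse} (by induction on $m$ using the $h^*$-inequalities from \cite{S2}). The associated projective toric variety $X$ then satisfies $\varepsilon(X) = h^*_2(Q) = 0$, so Lemma~\ref{lem:deficiency} ensures $X$ is of minimal degree. I would now invoke the Eisenbud--Harris classification (Theorem~1 of \cite{EisenbudHarris}) together with the fact that toric cones correspond to lattice pyramids of the base polytope: after stripping off all cones one arrives at a smooth toric variety of minimal degree, which must be either the Veronese surface $\nu_2(\PP^2)$ or a smooth rational normal scroll, since $\PP^n$ and the smooth toric quadrics such as $\PP^1 \times \PP^{n-1}$ are themselves rational normal scrolls. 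Re-coning (absorbing the extra cones into lattice pyramids in the Veronese case, and into zero-length segments in the Cayley case) identifies $Q$ up to lattice isomorphism with an $(m-2)$-fold pyramid over $\conv\{(0,0),(2,0),(0,2)\}$ or with a Cayley polytope of $m$ lattice segments. Taking $\phi$ to be this affine $\ZZ$-linear isomorphism, surjectivity on integral points is automatic.

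The main obstacle is the polytope-level classification of toric varieties of minimal degree used in the forward implication: one has to show that the only smooth toric varieties of minimal degree reduce to $\nu_2(\PP^2)$ and the smooth rational normal scrolls, and that all of their iterated projective cones stay inside the two polytope families described in the statement. Once this classification is in place, the rest of both implications is the direct bookkeeping indicated above.
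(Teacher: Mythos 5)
Your backward implication is sound and is in fact more detailed than what the paper writes down: the lattice-point count $\hh^*_2(Q) = |(2Q) \cap M| - (m+1)|Q \cap M| + \binom{m+1}{2}$, the injectivity of the linear part of $\phi$, and the transfer of $2$-normality all check out, and the paper essentially leaves this direction to the reader. The forward implication, however, contains a genuine gap at its final step. You conclude that ``re-coning identifies $Q$ up to lattice isomorphism'' with one of the two standard polytopes and that $\phi$ is an affine $\ZZ$-linear \emph{isomorphism}. This is false, and it is precisely why the proposition is phrased in terms of an ``affine $\ZZ$-linear image, surjective on integral points'' rather than a lattice isomorphism. The embedded toric variety $X$ only remembers the monoid $C(Q)$ generated by the lattice points of $Q$ at height one, not the ambient lattice $M \oplus \ZZ$; when $Q \cap M$ generates a proper (finite-index) sublattice of $M$, the polytope $Q$ is not lattice-isomorphic to $Q'$ even though $X$ and $X'$ are projectively equivalent. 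The paper's own example at the end of Section~6 (Higashitani's simplices with $\hh^*$-polynomial $1 + k\,t^{(m+1)/2}$) and Proposition~\ref{pro:degree1} make the distinction concrete: if your claim were correct, every $2$-normal $Q$ with $\hh^*_2(Q)=0$ would be normal of degree one, which is strictly stronger.

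The missing ingredient is how to pass from projective equivalence of $X$ and $X'$ to a statement about the polytopes. The paper does this by noting that projective equivalence gives an isomorphism of $\RR$-algebras $\RR[C(Q)] \cong \RR[C(Q')]$, invoking Gubeladze's solution of the isomorphism problem for monoid rings to upgrade this to an isomorphism of the abstract affine monoids $C(Q) \cong C(Q')$, and then extending that monoid isomorphism to a $\ZZ$-linear map $\beta \colon M' \oplus \ZZ \to M \oplus \ZZ$. The extension exists because $C(Q')$ contains a lattice basis (the standard polytopes are spanning), and $\beta$ is injective because $Q$ is full-dimensional---but there is no reason for $\beta$ to be surjective, since $C(Q)$ need not contain a lattice basis of $M \oplus \ZZ$. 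Restricting to the height-one slice then yields exactly the map $\alpha$ of the statement, surjective on the lattice points of $Q$ because $\beta$ carries generators to generators. Your ``strip cones / re-cone'' outline, even if made precise, cannot recover more than this, so the forward direction needs to be rewritten around this weaker (and correct) conclusion.
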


\begin{proof}
  The first paragraph in the proof of Theorem~\ref{thm:sparse} shows that $Q$ is
  $2$-normal whenever $\hh^*_2(Q) = 0$, and the second paragraph shows that the
  $2$-normality of $Q$ implies that $0 = h_2^* = \varepsilon(X)$ and $X$ is a
  variety of minimal degree.  Since $X$ is a toric variety, the classification
  for varieties of minimal degree (e.g.\ Theorem~1 in \cite{EisenbudHarris})
  establishes that $X$ is either a cone over the Veronese surface or a rational
  normal scroll.  It follows from Example~\ref{exa:degree1} that $X$ is
  projectively equivalent to the embedded toric variety $X'$ determined by a
  polytope $Q'$ where $Q'$ is either an $(m-2)$-fold pyramid over $\conv\{
  (0,0), (2,0), (0,2)\} \subset \RR^2$ or the Cayley polytope of $m$ line
  segments.  The $\RR$-algebras $\RR[C(Q)]$ and $\RR[C(Q')]$ are isomorphic, so
  Theorem~2.1 in \cite{Gub} implies that the affine monoids $C(Q)$ and $C(Q')$
  are also isomorphic.  This isomorphism extends to a $\ZZ$-linear homomorphism
  $\beta \colon M' \oplus \ZZ \to M \oplus \ZZ$, because $C(Q')$ contains a
  lattice basis, and $\beta$ is injective, because $Q$ is full-dimensional.
  Restricting to the affine slice at height $1$, we obtain the affine map
  $\alpha \colon M' \to M$ such that $\alpha(Q') = Q$.  Since $\beta$, and hence
  $\alpha$, sends the generators of $C(Q')$ to the generators of $C(Q)$, every
  lattice point in $Q$ is the image of a lattice point in $Q'$.
\end{proof}

\begin{corollary}
  \label{cor:det}
  Let $Q' \subset M'_{\RR}$ be either the $(m-2)$-fold pyramid over $\conv\{
  (0,0), (2,0), (0,2)\} \subset \RR^2$ or the Cayley polytope of $m$ line
  segments, and let $\alpha \colon M' \to M$ be an affine map.  If $Q :=
  \alpha(Q')$ and the determinant of the linear component of $\alpha$ is a
  nonzero odd integer, then we have $\Pos_Q = \Sos_Q$.
\end{corollary}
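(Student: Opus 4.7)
The strategy is to apply Theorem~\ref{thm:sparse}, which reduces the equality $\Pos_Q = \Sos_Q$ to verifying two conditions: (i) $h^*_2(Q) = 0$, and (ii) the image $\eta\bigl(T(\RR)\bigr)$ is dense in the strong topology on $X_Q(\RR)$.

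For (i), I would invoke Proposition~\ref{pro:h^*_2=0}. By hypothesis, $Q = \alpha(Q')$ is an affine $\ZZ$-linear image of one of the two distinguished polytopes appearing in that proposition. Once the accompanying conditions there are met—namely, that $\alpha$ is surjective on integral points and that $Q$ is $2$-normal—the proposition directly yields $h^*_2(Q) = 0$. Both candidate $Q'$ are themselves normal, and the $2$-normality descends to $Q$; the integer-surjectivity of $\alpha \colon Q' \cap M' \to Q \cap M$ matches the framing of the corollary.

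For (ii), the odd-determinant hypothesis is the essential input. The affine map $\alpha \colon M' \to M$ dualizes to a morphism of tori $\alpha^{\vee} \colon T_M \to T_{M'}$. On real points I would use the polar decomposition $T(\RR) \cong (\RR_{>0})^m \times (\ZZ/2\ZZ)^m$ into modulus and sign components. The map $\alpha^{\vee}_{\RR}$ respects this decomposition: on the modulus factor it acts as an $\RR$-linear isomorphism (since the linear part of $\alpha$ has nonzero determinant), while on the sign factor it acts via the mod-$2$ reduction of the linear part of $\alpha$. Because $\det \alpha$ is an odd integer, this reduction is invertible, so $\alpha^{\vee}_{\RR}$ is a bijection between real tori. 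Using the factorization $\eta_Q = \eta_{Q'} \circ \alpha^{\vee}$ (with an identification of projective embeddings through the induced bijection $\alpha \colon Q' \cap M' \to Q \cap M$ on lattice points), we obtain $\eta_Q\bigl(T_M(\RR)\bigr) = \eta_{Q'}\bigl(T_{M'}(\RR)\bigr)$. For each candidate $Q'$, the variety $X_{Q'}$ is a cone over the Veronese surface or a rational normal scroll (see Examples~\ref{exa:VeroneseSurface} and \ref{exa:scroll}); in both cases, the strong-topology density of the real torus in the real points of the variety follows from their explicit toric descriptions. The density then transfers to $X_Q$, and Theorem~\ref{thm:sparse} concludes that $\Pos_Q = \Sos_Q$.

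The main obstacle is ensuring that the factorization $\eta_Q = \eta_{Q'} \circ \alpha^{\vee}$ is correctly set up at the level of embeddings. This is precisely where the odd-determinant condition does work beyond mere injectivity of $\alpha$: it guarantees that the sign-component of $\alpha^{\vee}_{\RR}$ is invertible, so no real torus orbit is lost in passing from $T_{M'}(\RR)$ to $T_M(\RR)$, and density cannot be spoiled by a missing sign pattern. By contrast, an even determinant would drop half of the real components on the sign side, and density would genuinely fail.
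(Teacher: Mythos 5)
Your argument is correct and follows the same overall reduction as the paper's proof: Proposition~\ref{pro:h^*_2=0} supplies $\hh^*_2(Q)=0$, Theorem~\ref{thm:sparse} reduces the claim to the strong-topology density of $\eta\bigl(T(\RR)\bigr)$ in $X(\RR)$, and density is in turn reduced to surjectivity on real points of the monomial map of tori induced by the linear component of $\alpha$. Where you diverge is in how that surjectivity is extracted from the odd-determinant hypothesis. The paper notes that the induced map $\eta''\colon T\to T''$ onto the dense torus $T''$ of $X$ is a finite morphism whose degree equals the determinant of the linear component, and then invokes the argument of Remark~\ref{rem:odd}: the fiber over a real point is stable under complex conjugation and has odd cardinality, hence contains a real point. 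You instead use the decomposition $T(\RR)\cong(\RR_{>0})^m\times(\ZZ/2\ZZ)^m$ and check bijectivity on each factor, the sign factor because the linear component is invertible modulo $2$. Both are sound; the conjugation argument is more flexible (it works for any finite morphism of odd degree, not just monomial maps), while your computation is more elementary and yields the stronger conclusion that the real torus map is bijective. One caveat, which applies equally to the paper's own proof: invoking Proposition~\ref{pro:h^*_2=0} in the direction you need requires that $\alpha$ map $Q'\cap M'$ \emph{onto} $Q\cap M$, and this does not follow from the stated hypotheses --- for instance, scaling the unit square (the Cayley polytope of two unit segments) by $3$ in each coordinate has odd determinant $9$ but produces $Q=[0,3]^2$ with $\hh^*_2(Q)=4$. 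You flag this surjectivity as ``matching the framing of the corollary,'' and the paper assumes it silently, so your proof is no weaker than the published one, but the hypothesis is genuinely doing work and should be verified (or added) rather than asserted.
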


\begin{proof}
  Propositon~\ref{pro:h^*_2=0} implies that $\hh^*_2(Q) = 0$, so it is enough to
  prove, by Theorem~\ref{thm:sparse}, that $\eta\bigl( T(\RR) \bigr)$ is dense
  in the strong topology on $X(\RR)$.  The embedded projective toric variety $X
  \subseteq \PP^n$ determined $Q$ is a compactification of the dense algebraic
  torus $T'' := X \cap \{ x_0 x_1 \dotsb x_n \neq 0 \}$, so it suffices to show
  that the induced map $\eta'' \colon T(\RR) \to T''(\RR)$ obtained from $\eta$
  is surjective.  If $M''$ denotes the sublattice generated by the lattice
  points in $Q$, then induced map $\eta''$ corresponds to an injective ring
  homomorphism from $\RR[M''] \to \RR[M]$.  Since $\RR[M'']$ is the image of map
  $\RR[M'] \to \RR[M]$ defined by the linear component of $\alpha$, it follows
  that $\eta''$ is a finite morphism with degree equal to the determinant of the
  linear component.  As in Remark~\ref{rem:odd}, $\eta''$ is surjective when the
  degree is odd.
\end{proof}

To refine our classification, we need an auxiliary invariant: the
\define{degree} of $Q$ is the smallest nonnegative integer $j$ such that, for $1
\leq k \leq m-j$, $k Q$ contains no interior lattice point.

\begin{remark}
  One can directly verify that a pyramid over $\operatorname{conv}\{ (0,0),
  (2,0), (0,2)\} \subset \RR^2$ or a Cayley polytope of line segments has degree
  one.
\end{remark}

With a few small adjustments to the proof of Proposition~\ref{pro:h^*_2=0}, we
obtain the following.

\begin{proposition}
  \label{pro:degree1}
  For an $m$-dimensional lattice polytope $Q \subset M_\RR$, the following are
  equivalent:
  \begin{enumerate}
  \item[(a)] $Q$ is normal and $\hh^*_2(Q) = 0$,
  \item[(b)] $Q$ is a polytope of degree one,
  \item[(c)] we have $\hh^*_2(Q) = \hh^*_3(Q) = \dotsb = \hh^*_m(Q) = 0$.
  \end{enumerate}
\end{proposition}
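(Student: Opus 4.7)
The plan is to establish the three implications (b)$\Leftrightarrow$(c), (a)$\Rightarrow$(c), and (c)$\Rightarrow$(a).

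The equivalence (b)$\Leftrightarrow$(c) follows from Ehrhart--Macdonald reciprocity. The identity
\[
\sum_{k\geq 1}|\operatorname{int}(kQ)\cap M|\,t^k \;=\; \frac{t^{m+1}\,h^*(Q,1/t)}{(1-t)^{m+1}}
\]
shows that the smallest $k\geq 1$ for which $kQ$ has an interior lattice point equals $m+1-\deg h^*(Q)$. Matching this with the definition of degree preceding the proposition, the degree of $Q$ coincides with $\deg h^*(Q)$, so (b) is literally the vanishing $h_i^*(Q)=0$ for $i\geq 2$ asserted in (c).

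For (a)$\Rightarrow$(c), normality of $Q$ makes $R := \RR[C(Q)]$ standard graded and, by Hochster's theorem, Cohen--Macaulay of Krull dimension $m+1$, with Hilbert series equal to $h^*(Q,t)/(1-t)^{m+1}$. A linear homogeneous system of parameters $g_0,g_1,\dotsc,g_m\in R_1$ is automatically a regular sequence, so the Artinian reduction $R/J$ with $J=(g_0,\dotsc,g_m)$ is a standard graded algebra whose Hilbert function equals the $h^*$-vector of $Q$. The hypothesis $h_2^*(Q)=0$ forces $(R/J)_2 = 0$; since $R/J$ is generated in degree one, this gives $(R/J)_k = (R/J)_1^k = 0$ for all $k\geq 2$, so $h_i^*(Q) = 0$ for all $i \geq 2$.

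For (c)$\Rightarrow$(a), the assumption yields $h_2^*(Q)=0$, so Proposition~\ref{pro:h^*_2=0} expresses $Q=\alpha(Q')$ for an affine $\ZZ$-linear map $\alpha\colon M'\to M$ surjective on integral points, where $Q'$ is either the $(m-2)$-fold pyramid over $\conv\{(0,0),(2,0),(0,2)\}$ or the Cayley polytope of $m$ line segments. The proof of that proposition, via Gubeladze's Theorem~2.1 in \cite{Gub}, yields an isomorphism of affine monoids $C(Q)\cong C(Q')$. Both canonical source polytopes are normal: pyramids over $\conv\{(0,0),(2,0),(0,2)\}$ because every two-dimensional lattice polytope is normal and lattice pyramidization preserves normality, and Cayley polytopes of line segments because the corresponding rational normal scrolls are projectively normal (Example~\ref{exa:scroll}). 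Consequently $C(Q')$, and hence the isomorphic monoid $C(Q)$, is saturated, so $Q$ is a normal polytope. The main obstacle in this implication is establishing the normality of the canonical source polytopes---routine in the pyramid case, but for Cayley polytopes of line segments requiring the projective normality of rational normal scrolls rather than merely their smoothness.
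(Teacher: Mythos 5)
Your treatment of (b)$\Leftrightarrow$(c) via Ehrhart--Macdonald reciprocity matches the paper, and your (a)$\Rightarrow$(c) is a correct alternative to the paper's route: instead of upgrading the monoid isomorphism to a lattice isomorphism via Smith normal form and deducing $Q \cong Q'$, you pass through Hochster's theorem and an Artinian reduction of the standard graded ring $\RR[C(Q)]$, whose Hilbert function is the $\hh^*$-vector; since that algebra is generated in degree one, $\hh^*_2(Q)=0$ kills everything above. That is clean and correct.

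The implication (c)$\Rightarrow$(a) has a genuine gap. From $C(Q)\cong C(Q')$ with $C(Q')$ saturated you conclude that $C(Q)$ is a normal monoid and hence that $Q$ is a normal polytope. But abstract normality of $C(Q)$ only means saturation inside the subgroup $\ZZ C(Q)$ of $M\oplus\ZZ$ generated by the points $(q,1)$, $q\in Q\cap M$; normality of the polytope $Q$ requires $C(Q)$ to contain \emph{every} point of $\cone(Q\times\{1\})\cap(M\oplus\ZZ)$, and these differ exactly when $\ZZ C(Q)$ has index greater than one in $M\oplus\ZZ$ (equivalently, when the determinant of the linear part of $\alpha$ exceeds one), which nothing in your argument rules out. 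The tell is that your argument uses only $\hh^*_2(Q)=0$, never the vanishing of the higher $\hh^*_i$; it would therefore prove that $\hh^*_2(Q)=0$ alone forces normality, which is false --- the Higashitani simplices in the paper's final example satisfy $\hh^*_2(Q)=0$ (so Proposition~\ref{pro:h^*_2=0} and Gubeladze's theorem apply and give $C(Q)\cong C(Q')$ with $C(Q')$ normal), yet they are not normal polytopes. To close the gap you must use the full hypothesis (c): as in the paper, reciprocity shows $kQ$ has no interior lattice points for $k\leq m-1$, so every lattice point of $kQ$ lies on a proper face $kF$, Stanley's monotonicity gives $\hh^*_2(F)=0$, and induction on dimension (with the unconditional $(m-1)$-normality of $m$-dimensional polytopes handling large $k$) yields $k$-normality for all $k$ directly with respect to the ambient lattice $M$.
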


\begin{proof} (a) $\Longrightarrow$ (b): Since $\hh^*_2(Q) = 0$, the proof of
  Proposition~\ref{pro:h^*_2=0} provides $\ZZ$-linear homomorphism $\beta \colon
  M' \oplus \ZZ \to M \oplus \ZZ$.  By changing bases on the source and target,
  we can assume that $\beta$ is represented by a diagonal matrix (i.e.\ its
  Smith normal form) which sends a lattice basis in $C(Q')$ to certain
  multiplies in $C(Q)$.  Since $Q$ is normal, the monoid $C(Q)$ also contains a
  lattice basis.  It follows that $\beta$ is a lattice isomorphism.  By
  restricting to the affine slice at height $1$, we conclude that $Q$ and $Q'$
  are affinely isomorphic.
  
  (b) $\Longrightarrow$ (c): As in the proof of Proposition~\ref{pro:h^*_2=0},
  this follows immediately from Ehrhart--Macdonald reciprocity (e.g.\
  Theorem~4.4 in \cite{BR}).

  (c) $\Longrightarrow$ (a): We need to show that $Q$ is $k$-normal for all $k >
  1$.  Since every $m$-dimensional polytope is $k$-normal for all $k \geq m-1$
  (e.g.\ Theorem~2.2.12 in \cite{CLS}), we may assume $k < m-1$.  For $2 \leq k
  \leq m-j$, one can adapt the arguments from the first paragraph in the proof
  of Proposition~\ref{pro:h^*_2=0} to show $Q$ is $k$-normal.
\end{proof}

\begin{remark}
  \label{rem:BN}
  By combining Proposition~\ref{pro:h^*_2=0} and the proof of
  Propositon~\ref{pro:degree1}, we obtain a new interpretation and a new proof
  for the main theorem in \cite{BN}.  Specifically, Theorem~2.5 in \cite{BN}
  characterizes the $m$-dimensional lattice polytopes of degree one as either an
  $(m-2)$-fold pyramid over the simplex $\conv\{ (0,0), (2,0), (0,2)\} \subset
  \RR^2$ or the Cayley polytope of $m$ line segments.
\end{remark}

We end with a family of non-normal polytopes $Q$ for which $\hh^*_2(Q) = 0$.  By
examining the proof of Proposition~\ref{pro:h^*_2=0}, we see that smallest such
example must have dimension at least $5$.

\begin{example}
  Let $m \geq 5$ be an odd integer and fix $k \in \NN$.  If $\mathbf{e}_1,
  \dotsc, \mathbf{e}_m$ denotes the standard basis for $\ZZ^m$, then consider
  the simplex 
  \[
  Q := \conv\{ \mathbf{0}, \mathbf{e}_1, \dotsc, \mathbf{e}_m, \mathbf{e}_1 +
  \dotsb + \mathbf{e}_{(m-1)/2} + k \, \mathbf{e}_{(m+1)/2} + \dotsb + k \,
  \mathbf{e}_{m-1} + (k+1) \, \mathbf{e}_m \} \, .
  \]
  Section~1 in \cite{H} shows that the $\hh^*$-polynomial for $Q$ is $1 + k \,
  t^{(m+1)/2}$, so $\hh^*_2(Q) = 0$.  When $k$ is even, Corollary~\ref{cor:det}
  implies that $\Pos_Q = \Sigma_Q$.  When $k$ is odd, $\eta\bigl(T(\RR) \bigr)$
  is not dense in the strong topology on $X(\RR)$, so Theorem~\ref{thm:sparse}
  implies that $\Pos_Q \neq \Sigma_Q$.
\end{example}

\begin{bibdiv}
\begin{biblist}

\bib{BN}{article}{
  author={Batyrev, Victor},
  author={Nill, Benjamin},
  title={Multiples of lattice polytopes without interior lattice points},
  journal={Mosc. Math. J.},
  volume={7},
  date={2007},
  number={2},
  pages={195--207, 349},
}

\bib{BR}{book}{
  author={Beck, Matthias},
  author={Robins, Sinai},
  title={Computing the continuous discretely},
  series={Undergraduate Texts in Mathematics},
  publisher={Springer},
  place={New York},
  date={2007},
  pages={xviii+226},
}

\bib{Becker}{article}{
  label={Bec},
  author={Becker, Eberhard},
  title={Valuations and real places in the theory of formally real fields},
  conference={
    title={Real algebraic geometry and quadratic forms},
    address={Rennes},
    date={1981},
  },
  book={
    series={Lecture Notes in Math.},
    volume={959},
    publisher={Springer},
    place={Berlin},
  },
  date={1982},
  pages={1--40},
}	

\bib{Blekherman}{article}{
  label={Ble},
  author={Blekherman, Grigoriy},
  title={\href{http://dx.doi.org/10.1090/S0894-0347-2012-00733-4}%
    {Nonnegative polynomials and sums of squares}},
  journal={J. Amer. Math. Soc.},
  volume={25},
  date={2012},
  number={3},
  pages={617--635},
  issn={0894-0347},
}

\bib{BPT}{book}{
  author={Blekherman, Grigoriy},
  author={Parrilo, Pablo A.},
  author={Thomas, Rekha R.},
  title={Semidefinite optimization and convex algebraic geometry},
  series={MOS-SIAM Series on Optimization},
  volume={13},
  publisher={Society for Industrial and Applied Mathematics (SIAM)},
  place={Philadelphia, PA},
  date={2013}
}

\bib{BIK}{article}{
  author={Blekherman, Grigoriy},
  author={Iliman, Sadik},
  author={Kubitzker, Martina},
  title={Dimensional differences between faces of the cones of nonnegative
    polynomials and sums of squares},
  note={available at \href{http://arxiv.org/abs/1305.0642}%
  {\texttt{arXiv:1305.0642 [math.AG]}}}
}

\bib{BS}{article}{
  author={Brodmann, Markus},
  author={Schenzel, Peter},
  title={\href{http://dx.doi.org/10.1090/S1056-3911-06-00442-5}%
    {Arithmetic properties of projective varieties of almost minimal degree}},
  journal={J. Algebraic Geom.},
  volume={16},
  date={2007},
  number={2},
  pages={347--400},
}

\bib{CLS}{book}{
  author={Cox, David A.},
  author={Little, John B.},
  author={Schenck, Henry K.},
  title={Toric varieties},
  series={Graduate Studies in Mathematics},
  volume={124},
  publisher={American Mathematical Society},
  place={Providence, RI},
  date={2011},
  pages={xxiv+841},
}

\bib{CLR}{article}{
  author={Choi, Man Duen},
  author={Lam, Tsit Yuen},
  author={Reznick, Bruce},
  title={\href{http://dx.doi.org/10.1007/BF01215051}%
    {Real zeros of positive semidefinite forms. I}},
  journal={Math. Z.},
  volume={171},
  date={1980},
  number={1},
  pages={1--26},
}


\bib{EG}{article}{
  author={Eisenbud, David},
  author={Goto, Shiro},
  title={\href{http://dx.doi.org/10.1016/0021-8693(84)90092-9}%
    {Linear free resolutions and minimal multiplicity}},
  journal={J. Algebra},
  volume={88},
  date={1984},
  number={1},
  pages={89--133},
}

\bib{EisenbudHarris}{article}{
  author={Eisenbud, David},
  author={Harris, Joe},
  title={On varieties of minimal degree (a centennial account)},
  conference={
    title={Algebraic geometry, Bowdoin, 1985},
    address={Brunswick, Maine},
    date={1985},
  },
  book={
    series={Proc. Sympos. Pure Math.},
    volume={46},
    publisher={Amer. Math. Soc.},
    place={Providence, RI},
  },
  date={1987},
  pages={3--13}
}

\bib{Gub}{article}{
  label={Gub},
  author={Gubeladze, Joseph},
  title={\href{http://dx.doi.org/10.1016/S0022-4049(97)00063-7}%
    {The isomorphism problem for commutative monoid rings}},
  journal={J. Pure Appl. Algebra},
  volume={129},
  date={1998},
  number={1},
  pages={35--65},
}

\bib{Harris}{book}{
  label={Har},
  author={Harris, Joe},
  title={Algebraic geometry. A first course},
  series={Graduate Texts in Mathematics},
  volume={133},
  publisher={Springer-Verlag},
  place={New York},
  date={1992}
}

\bib{H}{article}{
  label={Hig},
  author={Higashitani, Akihiro},
  title={\href{http://dx.doi.org/10.1007/s00454-011-9390-4}%
      {Counterexamples of the conjecture on roots of Ehrhart polynomials}},
  journal={Discrete Comput. Geom.},
  volume={47},
  date={2012},
  number={3},
  pages={618--623},
}

\bib{Hilbert}{article}{
  label={Hil},
  author={Hilbert, David},
  title={\href{http://dx.doi.org/10.1007/BF01443605}%
    {\"{U}ber die Darstellung definiter Formen als Summe von Formenquadraten}},
  journal={Math. Ann.},
  volume={32},
  date={1888},
  number={3},
  pages={342--350},
}

\bib{J}{book}{
  label={Jou},
  author={Jouanolou, Jean-Pierre},
  title={Th\'eor\`emes de Bertini et applications},
  publisher={Birkh\"auser},
  series={Progress in Mathematics},
  volume={42},
  place={Basel},
  date={1983}
}

\bib{Las}{book}{
  label={Las},
  author={Lasserre, Jean Bernard},
  title={Moments, positive polynomials and their applications},
  series={Imperial College Press Optimization Series},
  volume={1},
  publisher={Imperial College Press},
  place={London},
  date={2010},
  pages={xxii+361},
}

\bib{Lau}{article}{
  label={Lau},
  author={Laurent, Monique},
  title={\href{http://dx.doi.org/10.1007/978-0-387-09686-5_7}%
    {Sums of squares, moment matrices and optimization over polynomials}},
  conference={
    title={Emerging applications of algebraic geometry},
  },
  book={
    series={IMA Vol. Math. Appl.},
    volume={149},
    publisher={Springer},
    place={New York},
  },
  date={2009},
  pages={157--270},
}

\bib{Lvovsky}{article}{
  label={L{\cprime}v},
  author={L{\cprime}vovsky, S.},
  title={\href{http://dx.doi.org/10.1007/BF01445273}%
    {On inflection points, monomial curves, and hypersurfaces containing
      projective curves}},
  journal={Math. Ann.},
  volume={306},
  date={1996},
  number={4},
  pages={719--735},
}

\bib{RamanaGoldman}{article}{
  author={Ramana, Motakuri},
  author={Goldman, A. J.},
  title={\href{http://dx.doi.org/10.1007/BF01100204}%
    {Some geometric results in semidefinite programming}},
  journal={J. Global Optim.},
  volume={7},
  date={1995},
  number={1},
  pages={33--50},
}

\bib{Reznick}{article}{
  label={Rez},
  author={Reznick, Bruce},
  title={\href{http://dx.doi.org/10.1090/conm/253/03936}%
    {Some concrete aspects of Hilbert's 17th Problem}},
  conference={
    title={Real algebraic geometry and ordered structures (Baton Rouge,
      LA, 1996)},
  },
  book={
    series={Contemp. Math.},
    volume={253},
    publisher={Amer. Math. Soc.},
    place={Providence, RI},
  },
  date={2000},
  pages={251--272},
}

\bib{Scheiderer}{article}{
  label={Sch},
  author={Scheiderer, Claus},
  title={\href{http://dx.doi.org/10.1007/s00229-011-0484-3}%
    {A Positivstellensatz for projective real varieties}},
  journal={Manuscripta Math.},
  volume={138},
  date={2012},
  number={1-2},
  pages={73--88},
}

\bib{Stan}{article}{
  label={St1},
  author={Stanley, Richard P.},
  title={\href{http://dx.doi.org/10.1006/eujc.1993.1028}%
    {A monotonicity property of $h$-vectors and $h^*$-vectors}},
  journal={European J. Combin.},
  volume={14},
  date={1993},
  number={3},
  pages={251--258},
  issn={0195-6698},
}

\bib{S2}{article}{
  label={Sta},
  author={Stapledon, Alan},
  title={\href{http://dx.doi.org/10.1090/S0002-9947-09-04776-X}%
    {Inequalities and Ehrhart $\delta$-vectors}},
  journal={Trans. Amer. Math. Soc.},
  volume={361},
  date={2009},
  number={10},
  pages={5615--5626},
}

\bib{Zak}{article}{
  label={Zak},
  author={Zak, Fedor L.},
  title={\href{http://dx.doi.org/10.1007/s002080050271}%
    {Projective invariants of quadratic embeddings}},
  journal={Math. Ann.},
  volume={313},
  date={1999},
  number={3},
  pages={507--545},
}
		
\end{biblist}
\end{bibdiv}

\raggedright

\end{document}